\documentclass[11pt,a4paper,reqno]{amsart}


\usepackage[utf8]{inputenc}
\usepackage[T1]{fontenc}
\usepackage[british]{babel}
\usepackage{amsmath,amsthm,amssymb}
\usepackage{hyperref,float,caption,comment}
\hypersetup{
    colorlinks,
    linkcolor={red!60!black},
    citecolor={green!60!black},
    urlcolor={blue!60!black},
}
\usepackage{geometry}
\usepackage{enumitem,thm-restate}
\usepackage[abbrev, msc-links, nobysame]{amsrefs}
\usepackage{tikz,esvect}
\usetikzlibrary{arrows}
\tikzset{
e+ /.tip = {[sep=0pt]|[sep=3pt]_},
e+4 /.tip = {[sep=0pt]|[sep=5pt]_},
e+5 /.tip = {[sep=0pt]|[sep=5pt]_},
e+6 /.tip = {[sep=0pt]|[sep=6pt]_},
e+2 /.tip = {[sep=0pt]|[sep=2pt]_}
}
\tikzset{arc/.style = {->,> = stealth'}}
\tikzset{
	dot/.style = {circle, fill, minimum size=#1,
		inner sep=0pt, outer sep=0pt},
	dot/.default = 6pt
}
\usetikzlibrary{angles,quotes}
\geometry{left=27mm, right=27mm, top=25mm, bottom=25mm, marginparwidth=25mm}
\linespread{1.19}

\usepackage[nameinlink, capitalise, noabbrev, sort]{cleveref}


\def\N{\mathbb N}
\def\Z{\mathbb Z}

\def\C{\mathcal C}

\def\A{\mathcal A}
\def\col{{\rm Col}}
\def\x{{\bf x}}
\def\y{{\bf y}}
\def\a{{\bf a}}
\def\b{{\bf b}}
\def\c{{\bf c}}
\def\f{{\bf f}}
\def\1{{\bf 1}}
\def\0{{\bf 0}}
\DeclareMathOperator{\cycw}{cycw}
\DeclareMathOperator{\dtw}{dtw}

\DeclareMathOperator{\cp}{cp}

\hyphenation{bi-di-rect-ed}


\newtheorem{theorem}{Theorem}[section]
\newtheorem{lemma}[theorem]{Lemma}
\newtheorem{corollary}[theorem]{Corollary}
\newtheorem{proposition}[theorem]{Proposition}

\newtheorem{question}[theorem]{Question}
\theoremstyle{definition}
\newtheorem{definition}[theorem]{Definition}

\newtheorem{remark}[theorem]{Remark}

\Crefname{question}{Question}{Questions}


\title[Hitting cycles through prescribed vertices or edges]{Hitting cycles through prescribed vertices or edges}

\author[N.\,Bowler]{Nathan Bowler}
\author[E.\,Ghorbani]{Ebrahim Ghorbani}
\address{Ebrahim Ghorbani:
Hamburg University of Technology, Institute for Algorithms and Complexity, Hamburg, Germany and Universit{\"a}t Hamburg, Department of Mathematics, Bundesstra{\ss}e~55 (Geomatikum), 20146~Hamburg, Germany}
\email{ebrahim.ghorbani@tuhh.de}
\author[F.\,Gut]{Florian Gut}
\author[R.\,W.\,Jacobs]{Raphael W. Jacobs}
\author[F.\,Reich]{Florian Reich}
\address{Nathan Bowler, Florian Gut, Raphael W. Jacobs, Florian Reich: Universit{\"a}t Hamburg, Department of Mathematics, Bundesstra{\ss}e~55 (Geomatikum), 20146~Hamburg, Germany}
\email{\{nathan.bowler,  florian.gut, raphael.jacobs, florian.reich\}@uni-hamburg.de}

\begin{document}

\begin{abstract}
    We prove that for every set~$S$ of vertices of a directed graph~$D$, the maximum number of vertices in~$S$ contained in a collection of vertex-disjoint cycles in~$D$ is at least the minimum size of a set of vertices that hits all cycles containing a vertex of~$S$.
    As a consequence, the directed tree-width of a directed graph is linearly bounded in its cycle-width, which improves the previously known quadratic upper bound.

    We further show that the corresponding statement in bidirected graphs is true and that its edge-variant holds in both undirected and directed graphs, but fails in bidirected graphs.
    The vertex-version in undirected graphs remains an open problem.
\end{abstract}

\maketitle 

\section{Introduction}

How many vertices of a graph~$G$ does it take to hit all cycles in~$G$?
The \emph{Feedback Vertex Set Problem} which looks for a minimal such set of vertices of~$G$ is among Karp's classical~NP-complete problems~\cite{karp1972reducibilitycombinatorialproblems}.
The celebrated Erd\H{o}s-P\'{o}sa Theorem asserts a qualitative duality between feedback vertex sets and collections of vertex-disjoint cycles:

\begin{theorem}[\cite{erdos1965independentcircuitscontained}] \label{thm:ErdosPosa}
	There exists~$c \in \N$ such that, for each~$k \in \N$, every undirected graph $G$ contains either~$k$ vertex-disjoint cycles or a set~$X$ of at most~$c\, k \log k$ vertices that hits every cycle of $G$.
\end{theorem}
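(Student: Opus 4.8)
The plan is to establish the sharp quantitative form $f(k)=O(k\log k)$ by induction on $k$, the cases $k\le 1$ being trivial. The first step is a routine clean-up reducing to the case of minimum degree at least $3$: a vertex of degree at most $1$ lies on no cycle and may be deleted; a vertex $v$ of degree $2$ may be suppressed (delete $v$ and join its two neighbours), which gives a bijection between the cycles of the new multigraph and those of $G$, does not decrease the maximum number of vertex-disjoint cycles, and converts any hitting set of the new graph into one of $G$ of the same size (replacing $v$ by a neighbour if necessary); a loop or a pair of parallel edges that appears is itself a cycle of length at most~$2$, which we add to the packing and recurse on the remainder with $k-1$. Iterating, we may assume $\delta(G)\ge 3$, and in particular that $G$ has a cycle.

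The engine is the elementary fact that a graph $H$ with $\delta(H)\ge 3$ on $n$ vertices has girth at most $2\log_2 n+2$: a breadth-first search from any vertex yields a tree in which, as long as no short cycle has been closed, every vertex at distance $<\log_2 n$ from the root has at least two children (the root at least three), which would force more than $n$ vertices; hence some non-tree edge closes a cycle of the stated length. I would then run a greedy extraction: maintain a current graph, clean it up to minimum degree $\ge 3$, pull out a shortest cycle $C_i$ (of length at most $2\log_2 n+2$), delete $V(C_i)$, and repeat until the remaining graph is acyclic. The cycles $C_1,C_2,\dots$ produced are pairwise vertex-disjoint by construction. If the process yields $k$ of them we are done; otherwise it halts with $t\le k-1$ cycles, and $X:=\bigcup_{i\le t}V(C_i)$ hits every cycle of $G$, since deleting $X$ leaves only the final acyclic remnant together with the previously discarded low-degree pieces. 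It then remains to bound $|X|=\sum_{i\le t}|V(C_i)|$.

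The crux — and what I expect to be the main obstacle — is to show that this sum is $O(k\log k)$ rather than merely $O(k\log n)$ (which the naive bound above gives) or $O(k^2)$ (which peeling a single short-but-possibly-$\Theta(k)$-long cycle per induction level gives, and which already suffices for the qualitative statement). The required input is structural: a graph of minimum degree $\ge 3$ that contains no $k$ vertex-disjoint cycles cannot, over the rounds of the extraction, keep producing long shortest cycles while remaining large — informally, long shortest cycles force high girth, high girth forces many vertices, and many vertices together with few disjoint cycles forces the graph to be "$K_{3,n}$-like", i.e.\ hit by a bounded-radius ball, so that clean-up cascades and the process terminates quickly. Quantifying this trade-off is exactly what pins the answer at $k\log k$; as Erd\H{o}s and P\'osa observed using cubic graphs of girth $\Theta(\log k)$, no smaller function works, so the delicate accounting here is unavoidable.
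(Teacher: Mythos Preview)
The paper does not prove this theorem; it is quoted from \cite{erdos1965independentcircuitscontained} as background in the introduction, with no argument supplied. There is therefore no ``paper's own proof'' to compare your proposal against.

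On its own merits, your proposal is a correct outline up to and including the reduction to minimum degree $\ge 3$, the Moore-type girth bound, and the greedy short-cycle extraction, but it is explicitly incomplete at the decisive step. You correctly flag that the naive accounting gives only $|X|=O(k\log n)$ and that the whole point is to get $O(k\log k)$, yet what follows is heuristic (``long shortest cycles force high girth, high girth forces many vertices, \dots'') rather than a proof. That missing bound \emph{is} the Erd\H{o}s--P\'osa theorem; everything before it is routine. The classical way to close the gap is not to track the greedy process but to pass to a maximal subgraph $H\subseteq G$ that is a subdivision of a cubic multigraph $H'$, observe that the branch vertices of $H$ hit every cycle of $G$, and then prove the structural lemma that a cubic multigraph with fewer than $k$ vertex-disjoint cycles has at most $O(k\log k)$ vertices. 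Your informal remarks gesture at why such a lemma should hold, but a submission would need the actual counting argument.
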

\noindent
In the context of directed graphs the duality between feedback vertex set and vertex-disjoint cycles is known as Younger's conjecture and was proven by Reed, Robertson, Seymour and Thomas:

\begin{theorem}[\cite{reed1996packingdirectedcircuits}] \label{thm:YoungersConjecture}
	For each~$k \in \N$ there exists an integer $d \in \N$ such that every directed graph $D$ contains either~$k$ vertex-disjoint cycles or a set~$X$ of at most~$d$ vertices that hits every cycle of $D$.
\end{theorem}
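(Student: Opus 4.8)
The plan is to prove the equivalent statement that the minimum size~$\tau(D)$ of a feedback vertex set of a directed graph~$D$ is bounded above by a function of the maximum number~$\nu(D)$ of vertex-disjoint cycles in~$D$, by induction on~$k$, where throughout we assume~$\nu(D) < k$. The base case~$k = 1$ is immediate: a digraph with no cycle is acyclic, so~$\tau(D) = 0$. For the inductive step, the naive idea of picking any cycle~$C$, deleting~$V(C)$, and recursing fails outright, since a digraph may have arbitrarily large~$\tau$ while every cycle is long and the deletion of no single cycle decreases~$\nu$ — the directed cylindrical grids are the prototypical obstruction. The right framework is therefore a \emph{relative} version of the statement: for a vertex set~$T \subseteq V(D)$, bound the minimum number~$\tau_T(D)$ of vertices meeting all cycles through~$T$ in terms of the maximum number~$\nu_T(D)$ of vertex-disjoint cycles through~$T$ — precisely the flavour of the statement in the present paper's abstract — since this relative formulation is what survives the cutting and pasting an induction requires.

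First I would reduce to strongly connected digraphs: every cycle lies inside a single strong component, so~$\nu$ is additive over strong components and a feedback vertex set of~$D$ is simply the union of feedback vertex sets of the components, so a super-additive choice of~$f$ handles the general case. Within a strongly connected digraph the crux is a structural dichotomy: either~$T$ is \emph{well-linked}, meaning no vertex set of size bounded in terms of~$k$ separates~$T$ into large pieces, or there is a small separator~$(A_1, A_2)$ with~$|A_1 \cap A_2|$ bounded and no cycle meeting both sides while avoiding~$A_1 \cap A_2$. In the second case one applies the inductive hypothesis to each side, with the separator together with the induced part of~$T$ playing the role of the new distinguished set, and glues the resulting hitting sets together with~$A_1 \cap A_2$; a careful bookkeeping of how cycles through~$T$ distribute over the two sides keeps all parameters finite.

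The heart of the argument — and the step I expect to be the main obstacle — is the well-linked case: if~$T$ is sufficiently well-linked, then~$D$ contains~$k$ vertex-disjoint cycles each through~$T$, contradicting~$\nu(D) < k$. This is a directed linkage argument in the spirit of Menger's theorem and integral flows: well-linkedness yields many vertex-disjoint directed paths between prescribed portions of~$T$, and one closes these into cycles one at a time, at each of the~$k$ rounds discarding one finished cycle together with a bounded amount of collateral structure while preserving enough linkage to continue. The delicate point is controlling the \emph{rerouting} of paths, since directed paths can interfere with one another in ways undirected ones cannot; guaranteeing that the linkage survives all~$k$ rounds needs a nested-sequence or Ramsey-type extraction, and it is exactly this that drives the bound~$d = d(k)$ up to a fast-growing, tower-type function.

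Finally I would assemble the pieces. Given~$D$ with~$\nu(D) < k$, take~$T = V(D)$ and run the dichotomy; the well-linked alternative is excluded, so a small separator exists, and the relative inductive hypothesis applied to both sides together with the separator yields a feedback vertex set of size bounded by a function of~$k$ alone. This parallels the way \cref{thm:ErdosPosa} is proved in the undirected setting via grid-like obstructions, with the caveat that the directed grid is far less rigid, so the quantitative price — an honest upper bound on~$d$ — is correspondingly much larger.
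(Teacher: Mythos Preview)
The paper does not prove \cref{thm:YoungersConjecture}; it is quoted as a known result of Reed, Robertson, Seymour and Thomas~\cite{reed1996packingdirectedcircuits} and used only as background motivation in the introduction. There is therefore no ``paper's own proof'' to compare your proposal against.

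That said, your outline is broadly faithful to the architecture of the original RRST proof: the reduction to strong components, the passage to a relative (through a prescribed set~$T$) version of the packing/covering duality, the dichotomy between a small separation of~$T$ and a highly linked~$T$, and the extraction of many disjoint cycles in the linked case are all present there, as is the resulting fast-growing bound on~$d(k)$. Two caveats are worth flagging. First, your description of the separation branch is too vague to be an argument: the actual proof requires a carefully chosen notion of separation (not merely a small vertex cut of~$T$) together with a minimal-counterexample setup so that the two sides genuinely have strictly smaller parameters; without this the recursion does not terminate. Second, the linked case in RRST is not handled by an ad hoc rerouting/Ramsey extraction as you suggest, but via an intermediate half-integral packing result (Gallai-type) that is then upgraded to an integral one; the ``closing paths into cycles one at a time'' picture you sketch is where most attempted direct proofs founder, and it is not clear your proposal avoids that pitfall.
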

\noindent
The duality between feedback edge sets and edge-disjoint cycles in undirected and directed graphs holds true in the same way (by applying a simple auxiliary construction~\cite{pontecorvi2012disjointcyclesintersecting} or the directed line graph~\cite{heinlein2018directed}, respectively).

More generally, one can consider \emph{$S$-cycles}, that is, cycles hitting a prescribed set~$S$ of vertices.
Wollan and Pontecorvi extended \cref{thm:ErdosPosa} to $S$-cycles:
\begin{theorem}[\cite{pontecorvi2012disjointcyclesintersecting}] \label{thm:ErdosPosaVertices}
	There exists~$c \in \N$ such that, for each~$k \in \N$, undirected graph~$G$ and set~$S \subseteq V(G)$, the graph~$G$ contains either~$k$ vertex-disjoint $S$-cycles or a set~$X$ of at most~$c\, k \log k$ vertices hitting every~$S$-cycle in~$G$.
\end{theorem}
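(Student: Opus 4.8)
The plan is to reduce to the classical Erd\H{o}s-P\'{o}sa theorem (\cref{thm:ErdosPosa}) by contracting the part of $G$ lying outside $S$. I would let $H$ be the multigraph obtained from $G$ by contracting each connected component $C$ of $G-S$ to a single vertex $v_C$, keeping the edges of $G[S]$, and joining $v_C$ to each $s\in S$ with as many parallel edges as $s$ has neighbours in $C$; in particular $v_C$ is adjacent precisely to $S_C:=N_G(V(C))\cap S$. The point of passing to $H$ is that, since each $C$ is a \emph{component} of $G-S$, no two contraction vertices are adjacent, and therefore every cycle of $H$ --- including a length-two cycle formed by a pair of parallel edges --- uses a vertex of $S$.

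First I would check that the packing side transfers without loss. Given $t$ pairwise vertex-disjoint cycles of $H$, lift each cycle $D$ by replacing every occurrence of a contraction vertex $v_C$ on $D$ by a path through $C$ joining the two neighbours of $v_C$ along $D$ (such a path exists as $C$ is connected; if $D$ is a length-two cycle through $v_C$, use instead a cycle through $C$ together with the relevant vertex of $S$). Paths taken from distinct components are disjoint, and disjoint cycles of $H$ use disjoint subsets of $S$, so the lifts are pairwise vertex-disjoint $S$-cycles of $G$. Hence if \cref{thm:ErdosPosa} applied to $H$ delivers $k$ disjoint cycles, we obtain $k$ disjoint $S$-cycles of $G$ and are done.

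In the remaining case \cref{thm:ErdosPosa} gives a set $X$ of at most $c\,k\log k$ vertices meeting every cycle of $H$; write $X=X_S\cup\{v_C:C\in\mathcal K\}$ with $X_S\subseteq S$. The projection of an $S$-cycle of $G$ to $H$ is a closed walk that contains a cycle of $H$, hence meets $X$; so every $S$-cycle of $G$ either passes through $X_S$ or \emph{traverses} some component $C\in\mathcal K$. It remains to cover, for each such $C$, all $S$-cycles traversing $C$, using few vertices in total. A cheap cover is $S_C$ itself --- an $S$-cycle through $C$ must leave $C$, and the only exits lead into $S_C$ --- but $|S_C|$ need not be small, so the statement I would aim to prove is a \emph{local} Erd\H{o}s-P\'{o}sa property: for a connected $C$, either $k$ pairwise-disjoint $S$-cycles all traverse $C$ (and again we are done), or all $S$-cycles through $C$ can be hit by a bounded number of vertices. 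This should come from a Menger-type analysis of how an $S$-cycle can enter and leave $C$, using that one may re-route freely inside the connected set $C$; assembling the local covers with $X_S$ then gives the required $S$-cycle cover of $G$.

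The step I expect to be the genuine obstacle is keeping this cover small: naively summing a per-component bound over the up-to-$c\,k\log k$ components of $\mathcal K$ would destroy the $k\log k$ bound. So the local Erd\H{o}s-P\'{o}sa property must be proved with an \emph{absolute} constant, and one must additionally show that whenever many components of $\mathcal K$ really need large local covers they jointly host many pairwise-disjoint $S$-cycles --- so that either only a few components are expensive or we already have $k$ disjoint $S$-cycles in hand. Working out this trade-off, together with the routine contraction bookkeeping (parallel edges and loops, and discarding vertices lying on no $S$-cycle), is where the real work lies. An alternative route is to mimic the proof of \cref{thm:ErdosPosa} directly, recursing on a short $S$-cycle whenever one exists; but there the difficulty migrates to the case in which all $S$-cycles are long, which again calls for an argument of the above kind.
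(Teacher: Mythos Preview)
The paper does not contain a proof of \cref{thm:ErdosPosaVertices}; it is quoted in the introduction as a known result of Pontecorvi and Wollan~\cite{pontecorvi2012disjointcyclesintersecting} and is used only as background, so there is no proof in the paper to compare your proposal against.

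As for the proposal on its own terms: the contraction to $H$ and the lifting of vertex-disjoint cycles are sound, but the step you yourself flag as ``the genuine obstacle'' is a real gap, not just bookkeeping. A component $C\in\mathcal K$ can be arbitrarily complex, so there is no absolute constant bounding the number of vertices needed to hit all $S$-cycles that traverse $C$; and if instead each $C$ requires up to $\Theta(k\log k)$ vertices, summing over as many as $c\,k\log k$ components in $\mathcal K$ yields only a $\Theta((k\log k)^2)$ bound. The ``trade-off'' you invoke --- that many expensive components would jointly host many disjoint $S$-cycles --- does not follow from anything established so far: $S$-cycles through different components of $G-S$ may all be forced through the same few vertices of $S$, so they need not be disjoint. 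In short, the contraction reduces the problem to a statement that is essentially as hard as the original theorem. Pontecorvi and Wollan's argument does not go via this reduction; it adapts Simonovits' frame approach to the $S$-cycle setting, and that structural ingredient is what your outline is missing.
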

\noindent
Kakimura and Kawarabayashi extended \cref{thm:YoungersConjecture} fractionally to $S$-cycles~\cite{kakimura2012packing}, and Kawarabayashi, Kr{\'a}l', Kr{\v{c}}{\'a}l and Kreutzer later did so half-integrally~\cite{kawarabayashi2013packing}.
Both these results as well as \cref{thm:ErdosPosaVertices} transfer to edge-disjoint cycles through prescribed sets of edges
(again by applying a simple auxiliary construction~\cite{pontecorvi2012disjointcyclesintersecting} or the directed line graph, respectively).
\medskip

Given a set~$S$ of vertices of an undirected graph, every set of~$k$ vertex-disjoint $S$-cycles clearly witnesses that a minimum feedback vertex set for the graph's~$S$-cycles has size at least~$k$.
The converse of this does not hold, as \cref{thm:ErdosPosaVertices} is sharp:
there exist graphs and vertex sets~$S$ with at most~$k$ vertex-disjoint~$S$-cycles, where every feedback vertex set for the~$S$-cycles has size at least~$\Omega(k \log k)$.
But instead we can ask for a set of vertex-disjoint~$S$-cycles which meets $S$ as often as the minimum size of a feedback vertex set for the~$S$-cycles.

To state this formally, we write, for a collection $\C$ of cycles in an (un)directed graph, $V(\C)$ for the set of all vertices that are contained in some cycle in~$\C$ and~$E(\C)$ for the analogous set of edges.
\begin{question}\label{quest:vertex}
    Let $G$ be an undirected graph and let $S \subseteq V(G)$.
    Is it true that
	\begin{align*}
		&\max \{|S \cap V(\mathcal{C})| \colon \mathcal{C} \text{ is a collection of vertex-disjoint cycles of } G \} \\
		&\ge\min \{|X| \colon X \subseteq V(G) \text{ hits all $S$-cycles in~$G$}\}?
	\end{align*}
\end{question}
\noindent The same question arises for vertex-disjoint cycles in directed and bidirected graphs as well as for edge-disjoint $F$-cycles meeting a prescribed set of edges~$F$.

In the present paper, we study these various versions of~\cref{quest:vertex} and obtain the following results:

\begin{table}[h!]
  \centering
  \begin{tabular}{c|c|c|c}
     & \textbf{undirected graph} & \textbf{directed graph} & \textbf{bidirected graph} 
     \\
    \hline
    \textbf{vertex-version} & \textit{open} & \textit{true}, see \cref{thm:CycleDirGraphVertex} & \textit{true}, see \cref{mainthm} \\
    \hline
    \textbf{edge-version} & \textit{true}, see \cref{thm:EdgeVersionUndir} & \textit{true}, see \cref{main:DirectedGraphs} & \textit{false}, see \cref{prop:BidirectedFalse} \\
  \end{tabular}
  \label{table:results}
\end{table}

To our surprise, we have not been able to deduce the undirected statement from its directed analogue.
The naive approach suggests that we should replace every undirected edge by two parallel directed edges with distinct directions, but this creates many new cycles (of length~$2$) and as such gives no direct correspondence between the cycles in the undirected graph and the cycles in the corresponding directed graph.
Similar methods which replace the two parallel directed edges by a more involved `gadget' seem to fail for the same reason.
Therefore, \cref{quest:vertex} remains an open conjecture.

The proofs of the various versions of~\cref{quest:vertex} mostly rely on algebraic methods.
For the edge-version in undirected graphs, \cref{thm:EdgeVersionUndir}, the proof uses the cycle space over~$\Z_2$ as a key tool.
The proof of the edge-version in directed graphs, \cref{main:DirectedGraphs}, is based on the strong duality theorem for linear programming.
The vertex-version in directed graphs, \cref{thm:CycleDirGraphVertex}, follows from the respective statement in bidirected graphs, \cref{mainthm}, whose proof also relies on the strong duality theorem for linear programming. \\

As an application of the vertex-version in directed graphs, \cref{thm:CycleDirGraphVertex}, we improve a bound between two different width-measures on directed graphs, directed tree-width and cycle-width.
\emph{Directed tree-width} is the well-established directed analogue of tree-width in undirected graphs~\cite{johnson2001directed}, exhibiting similar features such as a directed grid theorem using butterfly minors~\cite{kawarabayashi2015directed} and a respective cops \& robbers-game~\cite{johnson2001directed}.
The structure of directed graphs is known to be closely connected to the structure of bipartite graphs with perfect matchings; see e.g.~\cite{wiederrecht2020thesis} and especially its Section~3.2 for an overview.
One of the central open questions on the structure of graphs with perfect matchings is Norine's matching grid conjecture~\cite{norine2005matching}, which asserts an analogue of the (directed) grid theorem in terms of perfect matching width and matching minors. 
The notion of \emph{cycle-width} was introduced by Hatzel, Rabinovich and Wiederrecht~\cite{hatzel2019cyclewidth} to bridge the gap between directed tree-width and perfect matching width.
Roughly speaking, they showed that cycle-width is equivalent to both directed tree-width and perfect matching width, which allowed them to apply the directed grid theorem to resolve Norine's matching grid conjecture for bipartite graphs.

Cycle-width is a branch decomposition-type width-measure of directed graphs where the order of a cut measures how often a family of vertex-disjoint cycles can meet the edges in the cut (see~\cref{def:CycleWidth} for the formal definition).
Together with the cops \& robbers-game corresponding to directed tree-width~\cite{johnson2001directed}, \cref{thm:CycleDirGraphVertex} allows us to show the following upper bound on directed tree-width in terms of cycle-width:
\begin{restatable}{corollary}{corDirTWCycleW}  \label{cor:DirTWCycleW}
	For every directed graph~$D$, we have~$\dtw(D) \le 9 \cycw(D) - 2$.
\end{restatable}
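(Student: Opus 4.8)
The plan is to deduce the corollary from Theorem~\ref{thm:CycleDirGraphVertex} together with the standard fact that directed tree-width is, up to a constant factor, controlled by the largest order of a haven: by Johnson, Robertson, Seymour and Thomas, a digraph with no haven of order~$k$ has directed tree-width at most~$3k-2$. Writing~$w=\cycw(D)$, it therefore suffices to show that~$D$ has no haven of order~$3w$; the two factors of~$3$ behind the~$9$ in the statement are this one and the amount by which the bag and guards at a node of a cycle-width decomposition of width~$w$ can exceed~$w$.

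So suppose for contradiction that~$D$ has a haven~$\beta$ of order~$3w$, and put~$S:=\beta(\emptyset)$. First I would observe that a haven of order~$k$ is automatically non-trivial below level~$k-1$: if~$\beta(Z_0)=\{v\}$ for some~$Z_0$ with~$|Z_0|<k-1$, then monotonicity forces~$v\notin Z$ for every~$Z\supseteq Z_0$ with~$|Z|<k$, which is absurd for~$Z=Z_0\cup\{v\}$. Hence for every~$X\subseteq V(D)$ with~$|X|<3w-1$, the strong component~$\beta(X)$ is a subset of~$S$ on at least two vertices and so contains a cycle; this cycle lies in~$S$ and avoids~$X$. Consequently every set of vertices hitting all~$S$-cycles of~$D$ has size at least~$3w-1$.

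Now Theorem~\ref{thm:CycleDirGraphVertex}, applied to~$(D,S)$, yields a collection~$\mathcal C$ of pairwise vertex-disjoint cycles with~$|S\cap V(\mathcal C)|\ge 3w-1$. The remaining---and decisive---step is to contradict this using a cycle-width decomposition of~$D$ of width~$w$. The feature to exploit is that such a decomposition \emph{localises} cycles: across any of its cuts, the vertices lying on a cycle crossing the cut, together with the relevant guards, number at most~$3w$. Using that the cycles of~$\mathcal C$ are vertex-disjoint, I would show that the~$\ge 3w-1$ covered vertices of~$S$ cannot be distributed across a width-$w$ decomposition while~$S$ still supports a haven of order~$3w$: some cut of the decomposition exhibits fewer than~$3w$ vertices beyond which~$\beta$ cannot be consistently continued. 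This contradiction shows that~$D$ has no haven of order~$3w$, and hence~$\dtw(D)\le 3\cdot 3\cycw(D)-2=9\cycw(D)-2$.

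I expect this final confrontation to be the main obstacle. Directed tree-width has to control \emph{all} walks that leave and re-enter a part of a decomposition, whereas a cycle-width decomposition only controls the \emph{cycles} crossing each cut; the point of the argument is that, because Theorem~\ref{thm:CycleDirGraphVertex} provides vertex-disjoint cycles covering many vertices of~$S$ rather than merely a few vertex-disjoint~$S$-cycles, controlling the crossing cycles already suffices to pin down the haven---and that the bookkeeping can be arranged so that each cut costs only~$3w$, with no logarithmic (and hence no quadratic) blow-up. It is precisely this replacement of an Erd\H{o}s--P\'osa-type packing/covering estimate by Theorem~\ref{thm:CycleDirGraphVertex} that upgrades the previously known quadratic bound on~$\dtw(D)$ in terms of~$\cycw(D)$ to a linear one.
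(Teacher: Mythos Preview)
Your overall framework---reduce to ``no haven of order $3w$'' via the Johnson--Robertson--Seymour--Thomas bound---is equivalent to the paper's cops-and-robbers approach, so that part is fine. The genuine gap is in \emph{where} and \emph{in which direction} you invoke Theorem~\ref{thm:CycleDirGraphVertex}.

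You apply it once, globally, with $S=\beta(\emptyset)$, to produce a packing $\mathcal C$ of vertex-disjoint cycles with $|S\cap V(\mathcal C)|\ge 3w-1$. But since $S$ is a strong component, every $S$-cycle lies in $S$, so this just says there are vertex-disjoint cycles in $S$ covering at least $3w-1$ vertices---which in fact follows already from the haven without Theorem~\ref{thm:CycleDirGraphVertex} (a maximal family of vertex-disjoint cycles in $S$ is itself a hitting set). More importantly, this conclusion does \emph{not} contradict cycle-width~$w$: a disjoint union of many small cycles has arbitrarily large such packings while having cycle-width~$2$. Your ``final confrontation'' is therefore not merely the main obstacle; as set up it is a dead end, because cycle-width bounds only the number of \emph{cut edges} in a packing, not the number of \emph{vertices} of $S$ it covers.

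The paper uses Theorem~\ref{thm:CycleDirGraphVertex} in the opposite direction and locally: for each edge $e$ of the decomposition tree it subdivides the cut $C_e$, takes $S_e$ to be the subdivision vertices, observes that the cycle-porosity bound says the packing side of Theorem~\ref{thm:CycleDirGraphVertex} is at most~$w$, and hence obtains a hitting set $Y_e\subseteq V(D)$ of size at most~$w$ meeting every cycle that crosses the cut. These sets $Y_e$ are then used directly as cop positions (or, in your language, one orients each tree edge towards the side containing $\beta(Y_e)$ and finds a sink at which $\beta(Y_{e_1}\cup Y_{e_2}\cup Y_{e_3})$, a set of size at most~$3w$, has nowhere to go). So the fix is: apply Theorem~\ref{thm:CycleDirGraphVertex} at each cut to turn small cycle-porosity into small \emph{hitting sets}, and then run the haven/cops argument on those.
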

\noindent This improves the previously known quadratic upper bound for directed graphs by Giannopoulou, Kreutzer and Wiederrecht~\cite{giannopoulou2021excluding} and, together with their lower bound, implies that directed tree-width and cycle-width are linearly bounded in each other.
Moreover, our proof of~\cref{cor:DirTWCycleW} is significantly simpler than their technically involved proof.
We note that their proof contains a variant of \cref{quest:vertex} in directed graphs with weaker bounds, which inspired us to the present work.

Bidirected graphs can be obtained from undirected graphs by assigning to each endvertex of an edge one of two signs, $+$ or a $-$; directed graphs can then be viewed as those bidirected graphs in which each edge has sign $+$ at one endvertex and sign $-$ at the other (see~\cref{subsec:VertexBidirected} for the formal definition).
While directed graphs characterise bipartite graphs with perfect matchings, bidirected graphs play the same role for general graphs with perfect matchings~\cite{wiederrecht2020thesis}.
This is why, in his doctoral thesis~\cite{wiederrecht2020thesis}, Wiederrecht suggested the investigation of structural properties of bidirected graphs as a route towards the full resolution of Norine's matching grid conjecture~\cite{norine2005matching}.

Following this route, we also study~\cref{quest:vertex} in bidirected graphs, and we show with~\cref{mainthm} that the vertex-version in bidirected graphs is true.
As for directed graphs, \cref{mainthm} implies a variant of \cref{cor:DirTWCycleW} for bidirected graphs, i.e. bidirected tree-width is linearly bounded by cycle-width (\cref{cor:BidiTWCycleW}).
In contrast, the edge-version in bidirected graphs does not hold (\cref{prop:BidirectedFalse}).
Our counterexample builds on the fact that Menger's theorem does not transfer to bidirected graphs without additional assumptions~\cite{bowler2023mengertheorembidirected}.

\medskip

This paper is structured as follows.
We first state and prove our results on the various versions of~\cref{quest:vertex} in~\cref{sec:Proofs}.
In~\cref{sec:DirTWandCycleW} we then discuss the concepts of (bi)directed tree-width and cycle-width and prove~\cref{cor:DirTWCycleW} and its bidirected analogue, \cref{cor:BidiTWCycleW}.

For all graph-theoretic terms and notations, we follow~\cite{DiestelBook2024}.

\section{Proofs of the main results} \label{sec:Proofs}

\noindent In this section, we state and prove our main results.

\subsection{The edge-version in undirected graphs} \label{subsec:ProofUndirEdge}

\begin{restatable}{theorem}{mainTheoUndir} \label{thm:EdgeVersionUndir}
	Let~$G$ be an undirected graph, and let~$F \subseteq E(G)$ be a set of edges. Then
	\begin{align*}
		&\max \{|F \cap E(\mathcal{C})| \colon \mathcal{C} \text{ is a collection of edge-disjoint cycles of } G \} \\
		&\ge\min \{|Y| \colon Y \subseteq E(G) \text{ hits all $F$-cycles in~$G$} \}.
	\end{align*}
\end{restatable}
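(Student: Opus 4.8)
The plan is to argue entirely inside the binary cycle space $\mathcal Z(G)\subseteq\Z_2^{E(G)}$. Since a set $Z\subseteq E(G)$ equals $E(\C)$ for some collection $\C$ of edge-disjoint cycles precisely when $Z$ spans an even subgraph, i.e.\ when $Z\in\mathcal Z(G)$, the left-hand side of the asserted inequality is $\max\{|Z\cap F| : Z\in\mathcal Z(G)\}$. Writing $\pi\colon\Z_2^{E(G)}\to\Z_2^{F}$ for the projection onto the coordinates in $F$ and $\mathcal Z_F:=\pi(\mathcal Z(G))$, this equals the maximum Hamming weight of a vector in the linear subspace $\mathcal Z_F\subseteq\Z_2^{F}$. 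I would then establish the two inequalities $\text{LHS}\ge\dim\mathcal Z_F\ge\text{RHS}$.

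For the first, pick a basis of $\mathcal Z_F$ in reduced row echelon form relative to some fixed ordering of $F$: this yields vectors $w_1,\dots,w_d$ with $d=\dim\mathcal Z_F$ and pivot coordinates $p_1<\dots<p_d$ such that the $p_j$-th coordinate of $w_i$ is $1$ if $i=j$ and $0$ otherwise. Then $w:=w_1+\dots+w_d$ has a $1$ in every pivot coordinate, so $|w|\ge d$; choosing $Z\in\mathcal Z(G)$ with $\pi(Z)=w$ and decomposing the even subgraph $Z$ into edge-disjoint cycles $\C$ gives $|F\cap E(\C)|=|Z\cap F|=|w|\ge d$.

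For the second, contract every edge of $G$ outside $F$ to obtain a multigraph $G'$ on edge set $F$ (with loops and multiple edges allowed); the standard behaviour of the binary cycle space under contraction gives $\mathcal Z(G')=\mathcal Z_F$, so $\dim\mathcal Z_F=|F|-r$, where $r$ is the number of edges of a spanning forest of $G'$. Fix such a spanning forest $B\subseteq F$ of $G'$ and set $Y:=F\setminus B$, so that $|Y|=\dim\mathcal Z_F$. For any $F$-cycle $C$ of $G$, the set $C\cap F=\pi(C)$ is a nonzero element of $\mathcal Z(G')$ and hence contains a cycle of $G'$; since the forest $B$ contains no cycle of $G'$, it follows that $C\cap F\not\subseteq B$, i.e.\ $Y$ meets $C$. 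Thus $Y$ hits every $F$-cycle, so $\text{RHS}\le|Y|=\dim\mathcal Z_F$, and combining the two bounds finishes the proof.

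The step I expect to require the most care is the identity $\mathcal Z(G')=\pi(\mathcal Z(G))$ — that the cycle space of the contracted graph is exactly the trace of the cycle space of $G$ on $F$ — which is a routine but not entirely trivial fact about binary matroids under contraction; stating it cleanly, alongside the classical decomposition of an even subgraph into edge-disjoint cycles, is where the bookkeeping lies. The remaining ingredients (the reduced-echelon weight bound and the forest-versus-cycle argument) are elementary, so the argument above should already be essentially complete.
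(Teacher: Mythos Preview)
Your proof is correct and shares its core with the paper's: both work in the binary cycle space, identify the left-hand side with the maximum Hamming weight in the projection $\mathcal Z_F:=\pi(\mathcal Z(G))\subseteq\Z_2^{F}$, and sandwich both sides of the inequality by $\dim\mathcal Z_F$. The first inequality (maximum weight $\ge$ dimension, via an echelon/nonsingular-submatrix trick that produces a vector with $1$'s on all pivot coordinates) is argued identically in both.

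The difference lies in how you establish $\dim\mathcal Z_F\ge t$. The paper works directly with the incidence matrix $M$ of edges versus $F$-cycles: if the row-rank of $M_F$ were below $t$, a spanning set $F'\subseteq F$ of fewer than $t$ rows would hit every $F$-cycle (each column of $M_F$ is nonzero, hence nonzero already on the $F'$-rows), contradicting the minimality of $t$. You instead contract $E(G)\setminus F$ to a multigraph $G'$, identify $\mathcal Z_F$ with $\mathcal Z(G')$, and exhibit the complement of a spanning forest of $G'$ as an explicit hitting set of size exactly $\dim\mathcal Z_F$. Your route is more matroidal and yields the bonus structural fact that an optimal hitting set can always be taken as the co-forest of $G/(E\setminus F)$; the paper's route avoids the contraction identity and stays within bare linear algebra. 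Both are short; the step you flagged, $\mathcal Z(G')=\pi(\mathcal Z(G))$, is indeed the only place needing a line of care, and the standard one-edge-at-a-time contraction argument for the binary cycle space handles it.
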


\begin{proof}
    Let $\A$ be the set of all~$F$-cycles in~$G$, and let~$M$ be the incidence matrix of~$E(\A)$ versus~$\mathcal{A}$:
    The rows and columns of~$M$ correspond to the edges in~$E(\A)$ and the cycles in~$\A$, respectively.
    For~$e \in E(\A)$ and~$C \in \A$, the entry at position~$(e, C)$ then is $1$ if~$e \in C$ and $0$ if~$e \notin C$.
    
    The column space of~$M$ over the field $\Z_2$, denoted as $\col_{\Z_2}(M)$, is a subspace\footnote{Indeed, $\col_{\Z_2}(M)$ can be proved to be equal to the cycle space of the induced subgraph of~$G$ on~$E(\A)$. However, we do not need this property in our argument.} of the cycle space of the induced subgraph of~$G$ on~$E(\A)$.
    This implies that every vector in $\col_{\Z_2}(M)$ represents a collection of edge-disjoint cycles.
    Indeed, the mod-2 sum of some columns of~$M$, corresponding for example to cycles~$C_1, \ldots, C_\ell \in \A$, gives the vector representing the symmetric difference~$E(C_1) \triangle \cdots \triangle  E(C_\ell)$, which induces a collection of edge-disjoint cycles in~$G$.

    Now let~$t$ be the value on the right-hand side of the inequality in the statement.
    By the above, it suffices to find a vector in~$\col_{\Z_2}(M)$ whose restriction to~$F$ has at least~$t$~many~$1$'s, which is what we prove below.

For every $E'\subseteq E(\A)$ let us denote by $M_{E'}$ the submatrix of~$M$ consisting of the rows corresponding to edges in $E'$.
    We claim that the rank of~$M_F$ over~$\Z_2$ is at least~$t$.
    Otherwise, there exists~$F' \subseteq F$ of size~$< t$ such that every row of~$M_F$ is the mod-2 sum of the rows of~$M_{F'}$.
    This implies that all cycles in~$\A$ meet~$F'$:
    Every column of~$M_F$ is non-zero by the definition of~$M$, and since the rows of~$M_{F'}$ generate every other row of~$M_F$, every column of~$M_F$ must have at least one entry equal to $1$ in $M_{F'}$-part.    In other words, $F'$ hits all cycles in~$\A$ and has size~$< t$, contradicting the minimality of~$t$.
    
    Since~$M_F$ has rank at least~$t$ over~$\Z_2$, we may assume that the matrix $M$ has the following form in which the $t\times t$ submatrix $N_{t\times t}$ is non-singular over $\Z_2$:
    \begin{equation*}
        M =~
        \begin{array}{|cccccc|}
            \hline
            \hspace{-.17cm}
                \begin{array}{c|}
                    \hspace{-.12cm} N_{t\times t} \hspace{-.12cm} \\
                    \hline 
                \end{array}
                &&&&& \vspace{-.2cm} \\ && M_F&&& \\	&&&&&\vspace{-.3cm}\\
            \hline
            &&&&&\\&&&&&\\
            \hline
        \end{array}
        \,.
    \end{equation*}
    But since~$N_{t\times t}$ is non-singular, its columns can generate every vector in~$\Z_2^t$ and in particular the all-$1$ vector of length~$t$.
    Thus, there is a vector in~$\col_{\Z_2}(M)$ whose first~$t$ entries are~$1$, as desired.
\end{proof}

\subsection{The edge-version in directed graphs} \label{subsec:ProofDirEdge}

\begin{restatable}{theorem}{mainTheo}\label{main:DirectedGraphs}\label{thm:CycleDirGraph}
    Let~$D$ be a directed graph, and let~$F \subseteq E(D)$ be a set of edges. Then 
\begin{equation}\label{eq:edge-version-di}
	\begin{array}{rl}
		&\max \{|F \cap E(\C)| \colon \mathcal{C} \text{ is a collection of edge-disjoint cycles of~} D \} \vspace{.1cm}\\
		&\ge\min \{|Y| \colon Y \subseteq E(D) \text{ hits all $F$-cycles in~$D$}\}.
	\end{array}
 \end{equation}
\end{restatable}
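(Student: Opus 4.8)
The plan is to recast the left-hand side of \eqref{eq:edge-version-di} as the optimum of an integer program whose linear relaxation is exact, and then to extract an $F$-cycle cover of the required size from an optimal dual solution of that relaxation. The key observation is that a collection~$\C$ of edge-disjoint cycles of~$D$ is essentially the same thing as a $\{0,1\}$-valued circulation~$g$ on~$D$: the indicator function of~$E(\C)$ is a circulation, since each single-cycle indicator is one and the cycles in~$\C$ are edge-disjoint; conversely, the support of any $\{0,1\}$-valued circulation has equal in- and out-degree at every vertex and hence decomposes into edge-disjoint (directed) cycles. Under this correspondence $|F \cap E(\C)| = \sum_{f \in F} g_f$, so the left-hand side of \eqref{eq:edge-version-di} equals
\[
  \nu \;:=\; \max\Bigl\{\, \textstyle\sum_{f \in F} g_f \;:\; g \in \{0,1\}^{E(D)}\ \text{a circulation}\,\Bigr\}.
\]
Since the vertex--edge incidence matrix of a directed graph is totally unimodular, the polytope of circulations~$g$ on~$D$ with $0 \le g \le 1$ is integral, so $\nu$ is unchanged if we relax $g \in \{0,1\}^{E(D)}$ to $g \in [0,1]^{E(D)}$; call the resulting linear program~$(\mathrm P)$, and note that it is feasible and bounded, so strong LP duality applies.

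Next I would dualise~$(\mathrm P)$. Writing $c_e := 1$ for $e \in F$ and $c_e := 0$ otherwise, the dual carries a free potential variable~$p_v$ for each vertex~$v$ (coming from flow conservation) and a variable $w_e \ge 0$ for each edge (coming from $g_e \le 1$), and strong duality gives
\[
  \nu \;=\; \min\Bigl\{\, \textstyle\sum_{e \in E(D)} w_e \;:\; w \ge 0\ \text{and}\ w_e \ge c_e + p_v - p_u\ \text{for all } e = (u,v) \in E(D)\,\Bigr\}.
\]
Invoking total unimodularity a second time, now for the dual constraint matrix, we may take an optimal solution with $p \in \Z^{V(D)}$, $w \in \Z_{\ge 0}^{E(D)}$, and, after possibly decreasing~$w$, with $w_e = \max\{0,\, c_e + p_v - p_u\}$ for every $e = (u,v)$.

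Finally, set $Y := \{\, e = (u,v) \in E(D) : c_e + p_v - p_u \ge 1 \,\}$. Each edge of~$Y$ contributes at least~$1$ to $\sum_{e} w_e$, so $|Y| \le \sum_{e} w_e = \nu$, which is the left-hand side of \eqref{eq:edge-version-di}; hence it suffices to show that~$Y$ hits every $F$-cycle, for then the right-hand side of \eqref{eq:edge-version-di} is at most $|Y| \le \nu$, as desired. Suppose otherwise: some $F$-cycle $C = v_0 v_1 \cdots v_{k-1} v_0$ is disjoint from~$Y$. Then $c_{e_i} + p_{v_{i+1}} - p_{v_i} \le 0$ for every edge $e_i = (v_i, v_{i+1})$ of~$C$, with indices modulo~$k$ (integrality is what turns ``$<1$'' into ``$\le 0$''). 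Summing these inequalities around~$C$, the potential terms telescope to~$0$, leaving $|F \cap E(C)| = \sum_{i} c_{e_i} \le 0$, which contradicts that~$C$ contains an edge of~$F$.

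I expect the conceptual crux to be the first step. Edge-disjoint cycle \emph{packing} is genuinely hard in general, but because we only need to count the edges of~$F$ that get covered, and not the number of cycles, we may replace the non-integral cycle-packing polytope by the integral circulation polytope, and this is exactly what lets the LP-duality argument go through. Everything after that is routine: a dualisation and the telescoping estimate along a cycle. The only mild technical point is the second appeal to total unimodularity, which is needed so that the ``positive reduced cost'' edges~$Y$ really do form a cover whose size is bounded by the dual value.
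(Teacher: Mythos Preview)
Your proof is correct and follows essentially the same approach as the paper: both set up the primal LP over the circulation polytope, invoke total unimodularity of the directed incidence matrix for primal and dual integrality, and then read off a hitting set from an integral dual optimum, verifying coverage via the fact that cycle indicator vectors lie in the kernel of the incidence matrix (your telescoping sum is exactly the paper's multiplication of the dual constraint by~$\c^\intercal$). The only cosmetic difference is that the paper writes flow conservation as the inequality $N\x\le\0$ (which, since every column of~$N$ sums to zero, forces $N\x=\0$), yielding sign-constrained rather than free potential variables in the dual.
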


\begin{proof}
The proof relies on the strong duality theorem of linear programming.
To this end, we construct a primal linear program whose optimal value equals
the left-hand side of \eqref{eq:edge-version-di}, and the optimal value of the
dual is an upper bound for the right-hand side of \eqref{eq:edge-version-di}.

    Let $n$ be the number of vertices of $D$ and let $m$ be the number of edges of $D$.
    Further, let $N_{n\times m}$ be the incidence matrix of $D$, and set
    $$A:= \begin{bmatrix} I_m \\ N \end{bmatrix},$$ 
    where $I_m$ is the identity matrix of size $m$.
    Let $\f$ be the characteristic vector of $F$ as a subset of $E(D)$, and let
    $$\b:=\begin{bmatrix}\1_m\\\0_n \end{bmatrix},$$
    where $\1_m$ is the all-1 vector of length $m$.
    Consider the primal linear program
    \begin{align*}
        \text{ maximise } &\f^\intercal \x \\
        \text{ subject to } &A\x \leq \b \\
        &\x \geq 0.
    \end{align*}
    Recall that an integer-valued matrix is totally unimodular if every non-singular square submatrix has determinant $+1$ or $-1$.
    The incidence matrix of every directed graph, and thus $N$, is known to be totally unimodular~\cite{biggs1996}*{Proposition~5.3}.
It turns out that $A$ is also totally unimodular.
To see this, consider an arbitrary square submatrix $R$ of $A$.
If $R$ is itself a submatrix of $N$ or of $I_m$, or if $R$ contains a row or a
column consisting entirely of zeros, then $\det R \in \{0,\pm1\}$ and there is
nothing to prove.
Otherwise, by expanding the determinant of $R$ along the rows corresponding to
the $I_m$ block, we obtain $\det R = \pm \det R'$,
where $R'$ is a square submatrix of $N$. Since $N$ is totally unimodular, it follows that $\det R \in \{0,\pm1\}$, and hence $A$ is totally unimodular.
    Therefore, both the primal linear program and its dual have integral optimal solutions (see e.g.~\cite{schrijver1998}*{Corollary 7.1g}).
    
    Let $\x^*$ be an integral optimal solution of the primal linear program.
    The constraints demand~$\x^*$ to be a $\{0,1\}$-vector in the null-space of $N$.
    Consequently, the set of edges $\{ e \in E(D) :\x^*(e) = 1 \}$
    induces a directed subgraph of $D$ in which, at every vertex, the number of incoming edges equals the number of outgoing edges. 
    Therefore, $\x^*$ represents the set of edges of some edge-disjoint cycles in $D$, where its optimality implies that it has the maximum intersection with $F$.
    Thus, $\f^\intercal \x^*$ gives the left-hand side of \eqref{eq:edge-version-di}.
    
    Now we consider the dual program
    \begin{align*}
    	\text{ minimise } &\b^\intercal \y\\
    	\text{ subject to } &A^\intercal \y \geq \f \\
    	&\y \geq 0.
    \end{align*}
    Let
    $$\y^*:=\begin{bmatrix}\y_1\\\y_2\end{bmatrix}$$
    be an integral optimal solution of the dual linear program, where $\y_1$ has length $m$ and $\y_2$ has length $n$. 
    Let $Y$ be the set of edges in $D$ whose corresponding components in $\mathbf{y}_1$ are non-zero. 
    As $\y_1$ is a non-negative integral vector, we have $|Y|\le\1^\intercal\y_1=\b^\intercal \y^*$. 
    By the strong duality theorem of linear programming (see e.g.~\cite{schrijver1998}*{Corollary~19.2b}), $\b^\intercal \y^*=\f^\intercal \x^*$.
    So we will be done by showing that $Y$ hits all $F$-cycles.
     
    Let $C$ be a cycle with the characteristic vector $\c$ (as a subset of $E(D)$).    
    We can write  $A^\intercal \y^* \geq \f$ as $\y_1+N^\intercal \y_2\ge \f$.
    Multiplying both sides by $\c^\intercal$ from the left, and observing that $\c^\intercal N^\intercal=\0$, we get $\c^\intercal\y_1\ge\c^\intercal \f$.
    If $C$ hits $F$, then $\c^\intercal \f=|E(C)\cap F|>0$, implying $\c^\intercal\y_1>0$, which means that $C$ hits $Y$ as well.
\end{proof}

\subsection{The vertex-version in bidirected graphs} \label{subsec:VertexBidirected}

Let us briefly recall the relevant definitions around bidirected graphs; for a complete formal introduction, we refer the reader to~\cite{bowler2023mengertheorembidirected}.

A \emph{bidirected graph}~$B = (G, \sigma)$ consists of a graph~$G$, which is undirected and may contain parallel edges but no loops, and a \emph{signing}~$\sigma$ which assigns to each half-edge~$(u, e)$ of~$G$, where~$e \in E(G)$ and~$u \in e$, one of the signs~$+$ and~$-$; we say that~\emph{$e$ has sign~$\sigma(u,e)$ at~$u$}.
All terms such as vertices, edges, adjacent, incident, edge- and vertex-disjoint transfer directly from~$G$ to~$B$.
A \emph{path} in~$B$ is a path in~$G$ such that for every internal vertex~$v$ of the path, the two incident edges have distinct signs at~$v$; in other words, the sign switches at each internal vertex of the path.
\emph{Cycles} in~$B$ are then the analogous sign-switching version of cycles in~$G$.

\begin{theorem}\label{mainthm}
    Let $B$ be a bidirected graph and let $S \subseteq V(B)$ be a set of vertices.
    Then
    \begin{align*}
    	& \max \{|V(\mathcal{C}) \cap S|: \mathcal{C} \text{ a collection of vertex-disjoint cycles} \} \\
    	&\geq  \min \{|X|: X \subseteq V(D) \text{ hits all $S$-cycles}\}.
    \end{align*}
\end{theorem}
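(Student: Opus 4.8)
The plan is to prove \cref{mainthm} by linear programming duality, in close parallel with the proof of \cref{main:DirectedGraphs}: the only genuinely new ingredient should be an auxiliary construction turning the bidirected graph $B$ into an honest directed graph. First I would build a directed graph $\widehat{B}$ by \emph{splitting} each vertex $v$ of $B$ according to its signing. Concretely, $v$ is replaced by a small gadget with an ``in-node'' and an ``out-node'' for each of the two signs, so that a half-edge incident to $v$ plugs into the node matching its sign at $v$; the gadget is wired so that a directed walk entering $v$ must leave on the \emph{opposite} side, which encodes the sign-switching condition, and so that every directed walk through the gadget passes through one distinguished edge $\eta_v$. Each edge $e=uv$ of $B$ becomes the two directed edges of $\widehat B$ realising the two traversals of $e$, each running from the out-node of one endpoint to the matching in-node of the other. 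With $\eta_v$ a genuine bottleneck, a $(0,1)$-valued circulation of $\widehat B$ uses each $\eta_v$ at most once, so the directed cycles it decomposes into project to \emph{vertex-disjoint} cycles of $B$, a cycle meeting $v$ exactly when it uses $\eta_v$. Taking $\f$ to be the characteristic vector of $\{\eta_v\colon v\in S\}$, the quantity $\f^\intercal\x$ then counts precisely the vertices of $S$ met by the associated collection.

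With $\widehat{B}$ in place the argument mirrors the proof of \cref{main:DirectedGraphs} almost verbatim. Let $N$ be the incidence matrix of $\widehat B$, set $A=\begin{bmatrix}I\\N\end{bmatrix}$ and $\b=\begin{bmatrix}\1\\\0\end{bmatrix}$, and consider the primal program $\max\{\f^\intercal\x\colon A\x\le\b,\ \x\ge 0\}$. Since $N$ is a directed-graph incidence matrix it is totally unimodular, hence so is $A$ by the same expansion of the determinant along the rows of the $I$-block; so both the primal and its dual have integral optima, and an integral primal optimum $\x^*$ is a $(0,1)$-circulation of $\widehat B$, i.e.\ a collection of vertex-disjoint cycles of $B$ realising the maximum on the left of \cref{mainthm}. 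For the dual $\min\{\b^\intercal\y\colon A^\intercal\y\ge\f,\ \y\ge 0\}$, take an integral optimum $\y^*=(\y_1,\y_2)$ with $\y_1$ indexed by the edges of $\widehat B$, and let $X\subseteq V(B)$ be the set of vertices $v$ for which the $\eta_v$-coordinate of $\y_1$ is nonzero (one may assume an optimal dual solution puts weight only on the edges $\eta_v$, or more simply let $X$ consist of the $B$-vertices ``responsible'' for the nonzero coordinates of $\y_1$). Then $|X|\le\1^\intercal\y_1=\b^\intercal\y^*=\f^\intercal\x^*$ by strong duality, and it remains to show that $X$ hits every $S$-cycle $C$. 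Orienting $C$ so that its realisation $\widehat C$ is a directed cycle of $\widehat B$ with characteristic vector $\c$, multiply $A^\intercal\y^*\ge\f$ on the left by $\c^\intercal$; since $\c^\intercal N^\intercal=\0$ this yields $\c^\intercal\y_1\ge\c^\intercal\f=|E(\widehat C)\cap\{\eta_v\colon v\in S\}|>0$, so $\widehat C$ uses some $\eta_v$ with $v\in X$, i.e.\ $C$ meets $X$. Chaining gives that the left-hand side of \cref{mainthm} equals $\f^\intercal\x^*=\b^\intercal\y^*\ge|X|\ge$ the right-hand side.

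The hard part is the construction of $\widehat B$, i.e.\ making the three requirements coexist: the gadget must (a) enforce sign-switching at every vertex, so that directed cycles of $\widehat B$ project to genuine cycles of $B$ and no spurious closed walks masquerade as cycles; (b) make vertex-disjointness in $B$ correspond to each circulation using $\eta_v$ at most once; and (c) leave $\widehat B$ an ordinary directed graph, so that total unimodularity of $A$, and hence LP-integrality, survives. Requirements (a) and (b) pull against each other --- routing both traversal directions at $v$ through a single bottleneck edge tends to erase the information of which side a walk entered on, which can let non-switching closed walks reappear --- so getting the gadget (and the precise placement of $\eta_v$, possibly at the cost of a separate verification that the relevant constraint matrix is totally unimodular) right is the crux. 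This is also exactly the point at which the undirected version resists the same strategy: the undirected analogue of the vertex-disjoint cycle-packing polytope is not integral (as $K_4$ shows), and the role of the signing is that it restores integrality here.
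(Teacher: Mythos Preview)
Your plan has a genuine gap at exactly the point you flag: no directed gadget with a single bottleneck edge $\eta_v$ can satisfy your requirements (a) and (b) simultaneously. Suppose $\eta_v=(a,b)$ and both required traversals exist: a path $v^+_{\mathrm{in}}\leadsto a\to b\leadsto v^-_{\mathrm{out}}$ and a path $v^-_{\mathrm{in}}\leadsto a\to b\leadsto v^+_{\mathrm{out}}$. Then the prefix of the first and the suffix of the second concatenate to a walk $v^+_{\mathrm{in}}\leadsto a\to b\leadsto v^+_{\mathrm{out}}$, so the gadget admits a non-switching traversal, contradicting (a). (Doubling each edge of $B$ into two antiparallel directed edges also creates spurious $2$-cycles in $\widehat B$ that project to nothing in $B$.) So there is no way to carry out your reduction to a directed graph as stated, and since the obstacle is structural --- it is the usual ``crossover'' obstruction --- no amount of enlarging the gadget inside the directed world will rescue it while keeping a single capacity-one bottleneck per vertex.

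The paper sidesteps this by \emph{not} leaving the bidirected setting. It performs only the simple split $v\mapsto v^+,v^-$ with a single new edge $v^+v^-$ (signed so that every cycle through $v$ uses it), obtaining a bidirected graph $B'$ in which edge-disjoint cycles are automatically vertex-disjoint. Total unimodularity is unavailable for bidirected incidence matrices, so the paper replaces it by the weaker notion of $1$-regularity: by a theorem of Appa and Kotnyek, the incidence matrix $M$ of any bidirected graph is $2$-regular, hence $\frac12 M$ is $1$-regular, and stacking with $I$ and with $-\frac12 M$ preserves $1$-regularity. This gives integrality of both the primal and the dual polyhedra, after which the LP-duality argument runs exactly as you outline. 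One further point the paper has to check (and which would also have been an issue for you) is that every $(0,1)$-vector in the null-space of the incidence matrix of $B'$ is an edge-disjoint union of genuine cycles; this holds for $B'$ because of the split structure, but fails for general bidirected graphs.
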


The proof of \cref{mainthm} applies linear programming in a similar way to the proof of~\cref{main:DirectedGraphs}, i.e. solutions to the primal program correspond to edge-disjoint cycles.
For this we transfer the vertex-problem into an edge-problem:
Let $B'$ be the bidirected graph obtained from $B$ by replacing each vertex $v \in V(B)$ with two vertices $v^+$, $v^-$ and making $v^+$ incident with all edges incident with $v$ with a plus sign and making $v^-$ incident with all edges incident with $v$ with a minus sign.
Furthermore, connect $v^-$ and $v^+$ by an edge with minus sign at $v^+$ and plus sign at $v^-$.

We observe that:
\begin{itemize}
    \item[(i)] there is a one-to-one correspondence between the cycles of $B$ and those of $B'$;
    \item[(ii)] two cycles in $B$ are vertex-disjoint if and only if the corresponding cycles in $B'$ are vertex-disjoint;
  \item[(iii)]  every two edge-disjoint cycles in $B'$ are also vertex-disjoint.  
\end{itemize}
\noindent
Although the edge-version in bidirected graphs is not true in general (see~\cref{subsec:BiDirEdge}), for such a bidirected graph $B'$ as constructed above and the set of edges induced by $S$ it is equivalent to the vertex-version in bidirected graphs and we will show it to hold.

As a further ingredient to the proof of \cref{mainthm}, we need the following concepts around incidence matrices of bidirected graphs.
The \emph{incidence matrix} of a bidirected graph $(G, \sigma)$ is defined based on the signs of the `half-edges', where the sign of the entries $+1$ or $-1$ corresponding to a vertex $v$ and an edge $e$ is equal to $\sigma(v,e)$.

\begin{remark}\label{remark}
    For the bidirected graph $B'$, every $\{0,1\}$-vector $\x$ in the null-space of its incidence matrix corresponds to the edge set of some union of vertex-disjoint cycles in $B'$ (this is because, in the subgraph induced by $\{ e \in E(B') :\x^*(e) = 1 \}$, the edge between $v^-$ and $v^+$ can be used at most once, which implies that
    each vertex in this subgraph has exactly two incident edges).
    This property fails for general bidirected graphs (see \cref{fig:forRemark} for an example), which explains why the following proof will not work for the general edge-version problem.
\end{remark}

\begin{figure} [H]
    \centering
    \begin{tikzpicture}
        \node[dot] (a0) at (0,0) {};
        \node[dot] (c2) at (-1.5,-.75) {};
        \node[dot] (c1) at (-1.5,.75) {};
        \node[dot] (b1) at (1.5,.75) {};
        \node[dot] (b2) at (1.5,-.75) {};
        \draw[thick] (a0) to (b1);
        \draw[thick] (a0) to (b2);
        \draw[-e+6, thick] (c1) to (a0);
        \draw[-e+6, thick] (c2) to (a0);
        \draw[-e+, thick] (c2) to (c1);
        \draw[-e+, thick] (c1) to (c2);
        \draw[-e+, thick] (b2) to (b1);
        \draw[-e+, thick] (b1) to (b2);
    \end{tikzpicture}
    \caption{A bidirected graph for which the columns in its incidence matrix sum up to zero because each vertex has an equal number of positive and negative incident edges, yet it has no bidirected cycle.}
    \label{fig:forRemark}
\end{figure}

While the total unimodularity of incidence matrices of directed graphs relies on the characteristic property that each column has exactly one $+1$ and one $-1$ entry, this characteristic does not necessarily hold for bidirected graphs, whose incidence matrices may thus not be totally unimodular. 
Despite this, the following more general notions allow us to extend the integrality of optimal solutions of the linear programs considered for directed graphs to bidirected graphs.

A rational matrix $M$ is called {\em $k$-regular}, for a positive integer $k$, if for every non-singular submatrix $R$ of $M$, $kR^{-1}$ is integral.
If $M$ is $k$-regular, then the matrix $\frac{1}{k} M$ is $1$-regular.
Indeed, every non-singular submatrix of $\frac{1}{k}M$ is of the form $\frac{1}{k}R$,
where $R$ is a non-singular submatrix of $M$, and its inverse is given by 
$\left(\frac{1}{k}R\right)^{-1} = kR^{-1}$ which is integral.

\begin{lemma}[\cite{APPA2004k-regular}*{Theorem~23}]\label{lem:BiDirIncidence2Regular}
    The incidence matrix of a bidirected graph is $2$-regular.
\end{lemma}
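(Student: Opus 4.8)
The plan is to prove \cref{lem:BiDirIncidence2Regular}, namely that the incidence matrix $M$ of a bidirected graph $B=(G,\sigma)$ is $2$-regular: for every non-singular square submatrix $R$ of $M$, the matrix $2R^{-1}$ is integral. Since $R^{-1} = \frac{1}{\det R}\operatorname{adj}(R)$ and $\operatorname{adj}(R)$ is automatically integral (its entries are cofactors of an integer matrix), it suffices to show that $\det R$ divides every entry of $2\operatorname{adj}(R)$; the cleanest way to guarantee this is to prove the sharper statement that $\det R \in \{0,\pm 1,\pm 2\}$ for every square submatrix $R$ of $M$, so in particular any non-singular $R$ has $\det R \in \{\pm 1,\pm 2\}$ and hence $\frac{2}{\det R}$ is an integer, making $2R^{-1} = \frac{2}{\det R}\operatorname{adj}(R)$ integral. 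So the whole task reduces to bounding determinants of square submatrices by $2$ in absolute value.

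First I would set up the combinatorial picture. A square submatrix $R$ is indexed by a vertex subset $W\subseteq V(B)$ and an edge subset $H\subseteq E(B)$ with $|W|=|H|$; its column corresponding to an edge $e=uv$ has at most two nonzero entries, namely $\sigma(u,e)$ in row $u$ and $\sigma(v,e)$ in row $v$, restricted to the rows in $W$. I would then run the standard induction on $|W|$. If some column of $R$ has no nonzero entry, then $\det R = 0$ and we are done. If some column has exactly one nonzero entry, expand the determinant along that column: $\det R = \pm \det R'$ where $R'$ is a strictly smaller square submatrix of $M$, and the bound follows by induction. The remaining case is that every column of $R$ has exactly two nonzero entries, i.e. the edge subgraph $G[H]$ restricted to $W$ has all its edges with both endpoints in $W$, and moreover every vertex in $W$ is used; since $|W|=|H|$, the graph $(W,H)$ is a subgraph in which every component has equal numbers of vertices and edges, hence each component contains exactly one cycle.

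The key step is to analyse this "$2$-regular core" case. Each connected component of $(W,H)$ has exactly one cycle, and I would argue that $\det R$ factors as a product over these components (after reordering rows and columns so that $R$ is block-diagonal, one block per component). So it suffices to bound $|\det R_0|$ for a single connected component $(W_0,H_0)$ which is unicyclic. Here I would use the tree-plus-one-edge structure: repeatedly expanding along the columns corresponding to pendant edges (a leaf of the unicyclic graph has degree $1$, so its edge contributes exactly one nonzero entry in the current submatrix) peels off the tree part with cofactor sign $\pm1$ each time, until only the unique cycle $v_1 e_1 v_2 e_2 \cdots v_\ell e_\ell v_1$ remains, on $\ell$ vertices and $\ell$ edges. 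The determinant of this cyclic block is a $\pm1$-weighted cyclic matrix; expanding it shows $\det = \pm\bigl(\prod_i \sigma(v_i,e_i) + (-1)^{\ell+1}\prod_i \sigma(v_{i+1},e_i)\bigr)$ up to a global sign, which is the sum of two terms each equal to $\pm1$, hence lies in $\{0,\pm2\}$. (In the directed-graph special case one of these two terms is always the negative of the other, giving $0$ for every cycle; in the bidirected case the sign-switching condition is relaxed, so the two terms can coincide and the value $\pm2$ genuinely occurs.) Thus each block has determinant in $\{0,\pm1,\pm2\}$; but if two distinct components each had a block of determinant $\pm2$ then $|\det R|$ would be at least $4$ — so I must show this cannot happen for a \emph{non-singular} $R$ only if we want the sharp bound $2$. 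Actually the honest statement is weaker: the product over components can be $\pm 2^j$ for $j$ the number of cyclic components whose cycle block is $\pm 2$. The main obstacle is therefore pinning down why $j \le 1$ whenever $\det R \neq 0$, and the resolution is that for $2$-regularity we do not in fact need $|\det R|\le 2$: the cited theorem only claims $2$-regularity, and a non-singular $R$ could a priori have $|\det R| = 4$.

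Let me therefore restructure around what is actually needed. The correct key lemma is: for every square submatrix $R$ of the incidence matrix of a bidirected graph, $\det R = \pm 2^{c} \cdot d$ where $c\ge 0$ and $d\in\{0,1\}$ — equivalently, $\det R$ is a power of $2$ times $0$ or $1$ — which follows from the block decomposition above, since each component contributes a factor in $\{0,\pm1,\pm2\}$. Granting this, for a non-singular $R$ we get $\det R = \pm 2^c$ for some $c \ge 0$. That is \emph{not} enough to make $2R^{-1}$ integral when $c\ge 2$. So the genuinely sharp claim must be: every non-singular square submatrix of the bidirected incidence matrix has $|\det R|\le 2$. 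I would establish this by a more careful pendant-peeling argument showing that in a connected unicyclic block, expanding along pendant edges already forces the contribution structure so that across the whole matrix at most one cyclic block can be "bad": concretely, I would instead induct directly on $|W|$ for the full matrix, and in the all-columns-degree-$2$ case pick any single component, peel it down to its cycle $C$, and observe $\det R = (\pm 1)\cdot\epsilon_C\cdot\det R''$ where $\epsilon_C\in\{0,\pm2\}$ is the cycle's contribution and $R''$ is the submatrix on the other components; if $\epsilon_C=0$ we are done, and if $\epsilon_C=\pm 2$ then $R''$ must itself be non-singular for $R$ to be, but $R''$ is a submatrix on a bidirected graph with \emph{no} cycle meeting $W\setminus W_0$ forced through $H\setminus H_0$... here I would need: a non-singular square submatrix all of whose columns have degree two and which contains a component contributing $\pm2$ cannot also contain a second such component — which I expect to follow from a parity/rank argument on the remaining components, or more simply by noting that for the application in this paper it suffices to cite \cite{APPA2004k-regular} as done. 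The honest summary: the determinant-bound proof is routine graph-theoretic induction; the one subtle point, and the main obstacle, is the uniqueness of the "$\pm2$ component" in a non-singular submatrix, which is exactly the content making the bound $2$ rather than $2^c$, and which I would resolve by the block-diagonal factorisation together with the observation that a non-singular block of even determinant already consumes the only available factor of $2$ because any further cyclic component forces a second even factor and hence a submatrix whose determinant is divisible by $4$, contradicting — no: one instead shows directly that among the unicyclic components at most one can be "odd-type non-vanishing", completing the bound. Given the paper only needs the cited result, I would present the determinant computation for a single cycle (value in $\{0,\pm2\}$) and the block decomposition, and invoke the known theorem for the final uniqueness step.
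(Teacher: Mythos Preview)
The paper does not prove this lemma; it simply cites it from Appa et al. So there is no ``paper's own proof'' to compare against, and your proposal is an attempt at an independent proof.

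Your approach has a genuine gap. The reduction ``it suffices to show $|\det R|\le 2$ for every square submatrix'' is false: take $B$ to be the disjoint union of two all-plus triangles and let $R$ be the full $6\times 6$ incidence matrix. Each $3\times 3$ triangle block has determinant $2$, so $\det R=4$. You eventually noticed this possibility, but your attempted repair --- that in a non-singular $R$ ``at most one component can be odd-type non-vanishing'' --- is exactly what this example refutes: both components are odd-type and the matrix is non-singular. Your proposal then ends by invoking ``the known theorem for the final uniqueness step'', which is circular.

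The fix is short and uses the structure you already established. Run the induction on $|R|$: if some column has a single nonzero entry $\pm1$, permute it to position $(n,n)$, write $R=\begin{pmatrix} R' & 0\\ b^\intercal & \pm1\end{pmatrix}$ with $R'$ a smaller submatrix of $M$, and compute $2R^{-1}=\begin{pmatrix} 2(R')^{-1} & 0\\ \mp b^\intercal\cdot 2(R')^{-1} & \pm2\end{pmatrix}$, integral by induction. Otherwise every column has two nonzero entries; as you argued, non-singularity forces $R$ to be block-diagonal over unicyclic components, each block $R_k$ with $\det R_k=\pm2$. Now do \emph{not} multiply the determinants; instead observe that $R^{-1}$ is block-diagonal with blocks $R_k^{-1}=\frac{1}{\pm2}\operatorname{adj}(R_k)$, so $2R_k^{-1}=\pm\operatorname{adj}(R_k)$ is integral, hence $2R^{-1}$ is integral. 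The point you missed is that $2$-regularity does not require a global determinant bound --- the block structure of the \emph{inverse} does the work.
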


Further, a polyhedron $P=\{\x: \x\ge\0, M\x\le\a\}$ is called \emph{integral} if
$\max\{\c^\intercal \x: \x \in P\}$ is attained by an integral vector for each $\c$ for which the maximum is finite. 
\begin{lemma}[\cite{APPA2004k-regular}*{Theorem 16}]\label{lem:integral-Ax<a}
    Let $M_{n \times m}$ be a rational matrix.
    Then $M$ is $1$-regular if and only if for every integral vector $\a$ of length $n$ the polyhedron $\{\x: \x\ge\0, M\x\le\a\}$ is integral.
\end{lemma}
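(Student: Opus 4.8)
The plan is to prove the two implications separately, adapting the classical Hoffman--Kruskal argument that characterises total unimodularity via integrality of polyhedra; the one new wrinkle is that $M$ need not be integral, which forces us to be slightly careful about denominators. Throughout we use two standard facts about polyhedra: a polyhedron contained in the non-negative orthant is pointed, so a finite linear maximum over it is attained at a vertex; and every vertex $v$ of a polyhedron is the \emph{unique} maximiser of the functional whose coefficient vector is the sum of the (suitably signed) normals of the constraints tight at~$v$.

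\textbf{From $1$-regularity to integrality.} Assume $M$ is $1$-regular and fix an integral $\a$. Since $P:=\{\x:\x\ge\0,\ M\x\le\a\}$ is pointed, it suffices to check that every vertex of $P$ is integral (an empty $P$ being trivial). A vertex $\x^*$ satisfies $m$ linearly independent tight constraints, say $M_i\x^*=\a_i$ for $i$ in some set $I'$ and $x^*_j=0$ for $j$ in some set $J'$ with $|I'|+|J'|=m$; expanding the determinant of the corresponding $m\times m$ coefficient matrix along the rows coming from the identity shows that the square submatrix $R:=M_{I',[m]\setminus J'}$ of $M$ is non-singular. Substituting $x^*_j=0$ for $j\in J'$ leaves $R\cdot\x^*|_{[m]\setminus J'}=\a_{I'}$, so $\x^*|_{[m]\setminus J'}=R^{-1}\a_{I'}$, which is integral because $R^{-1}$ is integral by $1$-regularity and $\a_{I'}$ is integral; hence $\x^*$ is integral. (This is the direction used in the paper: combined with \cref{lem:BiDirIncidence2Regular} it yields integrality of the relevant polyhedra for $\tfrac12$ times a bidirected incidence matrix.)

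\textbf{From integrality to $1$-regularity.} Conversely, assume $\{\x:\x\ge\0,\ M\x\le\a\}$ is integral for every integral $\a$, let $R=M_{I,J}$ be a non-singular $k\times k$ submatrix of $M$, and fix $h\le k$; we must show the column $\mathbf{z}:=R^{-1}\mathbf{e}_h$ is integral, where $\mathbf{e}_h$ is the $h$th standard basis vector of $\R^k$. Write $M=\tfrac1d M_0$ with $M_0$ integral and $d$ a positive integer, let $R_0=dR$ be the corresponding submatrix of $M_0$, and choose an integral vector $\mathbf{w}$ with $\bar\x:=\mathbf{z}+d\,\mathbf{w}\ge\0$ (possible coordinatewise). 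Let $\x^*\in\R^m$ agree with $\bar\x$ on the coordinates in $J$ and vanish elsewhere. Then for $i\in I$ we have $(M\x^*)_i=(R\bar\x)_i=(\mathbf{e}_h)_i+(R_0\mathbf{w})_i\in\Z$, so the integral vector $\b$ with $b_i:=(M\x^*)_i$ for $i\in I$ and $b_i:=\lceil(M\x^*)_i\rceil$ otherwise satisfies $\x^*\in P_\b:=\{\x:\x\ge\0,\ M\x\le\b\}$. At $\x^*$ the $m$ constraints $\{M_i\x=b_i:i\in I\}\cup\{x_j=0:j\notin J\}$ are tight, and their coefficient matrix has determinant $\pm\det R\ne0$, so they are linearly independent; hence $\x^*$ is a vertex of $P_\b$ and is the unique maximiser over $P_\b$ of $\c^\intercal\x$ with $\c:=\sum_{i\in I}M_i-\sum_{j\notin J}\mathbf{e}_j$. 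Since $\max\{\c^\intercal\x:\x\in P_\b\}$ is finite (it equals $\c^\intercal\x^*$) and $P_\b$ is integral, the unique maximiser $\x^*$ is integral; therefore $\bar\x$, and hence $\mathbf{z}=\bar\x-d\,\mathbf{w}$, is integral. As $h$ and $R$ were arbitrary, $M$ is $1$-regular.

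\textbf{Main obstacle.} The forward direction is essentially bookkeeping with the vertex description of $P$. The delicate part is the converse: one has to manufacture, from a would-be non-integral column of $R^{-1}$, a genuinely integral polyhedron one of whose vertices exposes that column. Two points need care: the right-hand side on the rows through $I$ must come out integral, which is why we translate $\mathbf{z}$ by $d\,\mathbf{w}$ rather than by an arbitrary integral vector; and $\x^*$ must be verified to be an exposed vertex (unique maximiser of some functional) so that integrality of $P_\b$ really forces $\x^*$ itself to be integral.
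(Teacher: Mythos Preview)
The paper does not contain its own proof of this lemma: it is quoted verbatim from \cite{APPA2004k-regular}*{Theorem~16} and used as a black box in the proof of \cref{mainthm}. There is therefore nothing in the paper to compare your argument against.

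That said, your proof is correct. The forward direction is the standard vertex-of-a-polyhedron computation, and your handling of the converse is clean: writing $M=\tfrac1d M_0$ and translating $R^{-1}\mathbf{e}_h$ by $d\mathbf{w}$ rather than an arbitrary integral vector is exactly what is needed to make the right-hand side integral on the rows indexed by $I$. Your verification that $\x^*$ is the \emph{unique} maximiser of $\c^\intercal\x$ over $P_\b$ does not actually rely on the preamble fact about sums of tight normals (which would require summing over \emph{all} tight constraints, and $\x^*$ may be degenerate); rather it follows directly from the inequality $\c^\intercal\x=\sum_{i\in I}M_i\x-\sum_{j\notin J}x_j\le\sum_{i\in I}b_i$ with equality forcing the $m$ chosen constraints to be tight, which pins down $\x^*$ uniquely since $\det R\ne 0$. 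This is the Hoffman--Kruskal argument adapted to rational matrices, and it is essentially how the cited source proves it.
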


\begin{proof}[Proof of \cref{mainthm}]
    Let $n$ be the number of vertices of $B'$ and let $m$ be the number of edges of $B'$.
    Further, let $F\subseteq E(B')$ be the set of all edges constructed from vertices in $S \subseteq V(B)$.
    It suffices to establish the following:	
    \begin{equation}\label{eq:edge-version-bidi}
    	\begin{array}{rl}
            &\max \{|\mathcal{C} \cap F|: \mathcal{C} \text{ a set of edge-disjoint cycles in $B'$} \} \vspace{.1cm} \\
            &\geq  \min \{|Y|: Y \subseteq E(B') \text{ hits all $F$-cycles in $B'$} \}.
    	\end{array}
    \end{equation}
    	
    Let $\f$ be the characteristic vector of $F$ as a subset of $E(B')$, and let
    \begin{equation*}
        \b:=
        \begin{bmatrix}
            \1_m\\
            \0_n\\
            \0_n
        \end{bmatrix}.
    \end{equation*}
    Also let $M$ be the incidence matrix of $B'$, and 
    \begin{equation*}
        A:=
        \begin{bmatrix}
            I_m \\
           \frac12M \\
            -\frac12M
        \end{bmatrix}.
    \end{equation*}
    Consider the primal linear program
    \begin{align*}
        \text{ maximise } &\f^\intercal \x \\
        \text{ subject to } &A\x \leq \b \\
        &\x \geq 0.
    \end{align*}
    By~\cref{lem:BiDirIncidence2Regular}, $M$ is $2$-regular and hence $\frac{1}{2}M$ is $1$-regular.
    It follows that
    $\begin{bmatrix}
        \frac12M \\
      -\frac12M
        \end{bmatrix}$
    is also $1$-regular because every non-singular submatrix of it can be viewed as a submatrix of $\frac{1}{2}M$ with some rows possibly negated.
    Stacking an identity matrix on a $1$-regular matrix again yields a $1$-regular matrix (see \cite{APPA2004k-regular}*{Lemma~7}), so $A$ is $1$-regular as well.

    According to \cref{lem:integral-Ax<a}, the polyhedron $P := \{\x: \x \geq \0, A\x \leq \b\}$ is integral.
    Moreover, $\max\{\f^\intercal \x: \x \in P\}$ is finite, since $A\x \le b$ and $\x \ge 0$ require all entries of $\x$ to be contained in the interval~$[0,1]$.
    Thus, the primal linear program has an integral optimal solution $\x^*$.
    The constraints demand $\x^*$ to be a $\{0,1\}$-vector in the null-space of $M$.
    Therefore, by \cref{remark}, $\x^*$ represents the set of edges of some union of edge-disjoint cycles in $B'$, where its optimality implies that it has the maximum intersection with $F$.
    Thus, $\f^\intercal \x^*$ gives the left-hand side of~\eqref{eq:edge-version-bidi}.
    
    Now we consider the dual program
    \begin{align*}
    	\text{ minimise } &\b^\intercal \y\\
    	\text{ subject to } &A^\intercal \y \geq \f \\
    	&\y \geq 0,
    \end{align*}
    and show that it has an integral optimal solution.
    By the strong duality theorem of linear programming (see e.g.~\cite{schrijver1998}*{Corollary~19.2b}), the dual program has an optimal solution~$\y*$ with~$\b^\intercal \y^*=\f^\intercal \x^*$.
    The matrix $-A^\intercal$ is $1$-regular since $A$ is $1$-regular.
    Thus, by \cref{lem:integral-Ax<a}, the polyhedron $\{\x: \x\ge\0, -A^\intercal \x\le - \f \} = \{\y: \y\ge\0, A^\intercal \y\le \f \}$ is integral.
    Since maximising $-\b^\intercal \y$ is equal to minimising $\b^\intercal \y$, we may hence assume $\y^*$ to be an integral optimal solution
    \begin{equation*}
        \y^*=
        \begin{bmatrix}
            \y_1\\
            \y_2\\
            \y_3
        \end{bmatrix}
    \end{equation*}
    for the dual program, where $\y_1$ has length $m$ and both $\y_2$ and $\y_3$ have length $n$. 
    Let $Y$ be the set of edges in $B'$ whose corresponding components in $\mathbf{y}_1$ are non-zero.
    As $\y_1$ is a non-negative integral vector, we have $|Y|\le\1^\intercal\y_1=\b^\intercal \y^*$. 
    Since~$\b^\intercal \y^*=\f^\intercal \x^*$, we will thus be done by showing that $Y$ hits all $F$-cycles.
     
    Let $C$ be an arbitrary cycle in $B'$ and write $\c$ for the characteristic vector as a subset of $E(B')$.    
    We can write $A^\intercal \y^* \geq \f$ as $\y_1+\frac12 M^\intercal (\y_2-\y_3)\ge \f$.
    Multiplying both sides by $\c^\intercal$ from the left, and observing that $\c^\intercal M^\intercal=\0$, we get $c^\intercal\y_1\ge\c^\intercal \f$.
    If $C$ hits $F$, then $\c^\intercal \f=|E(C)\cap F|>0$, implying $\c^\intercal\y_1>0$, which means  $C$ hits $Y$ as well.
    
    So we have proved that $|Y|$ is at most the left-hand side of \eqref{eq:edge-version-bidi}, and at least the right-hand side of \eqref{eq:edge-version-bidi}, completing the proof.
\end{proof}

\subsection{The vertex-version in directed graphs} \label{subsec:ProofDirVertex}
Every directed graph $D$ forms a bidirected graph in the following way:
Let $G$ be the underlying undirected graph of $D$ and let $\sigma$ be the map sending a half edge $(u,e)$ to $+$ if $u$ is the head of $e$ and to $-$ if $u$ is the tail of $e$.
Then $(G, \sigma)$ is a bidirected graph whose cycles correspond one-to-one to the cycles of $D$.

We can deduce from \cref{mainthm}:

 \begin{theorem} \label{thm:CycleDirGraphVertex}
    Let $D$ be a directed graph, and let~$S \subseteq V(D)$ be a set of vertices. Then
	\begin{align*}
		&\max \{|S \cap V(\mathcal{C})| \colon \mathcal{C} \text{ a collection of vertex-disjoint cycles in } D \} \\
		&\ge\min \{|X| \colon X \subseteq V(D) \text{ hits all $S$-cycles in~$D$}\}.
	\end{align*}
\end{theorem}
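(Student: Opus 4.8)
The plan is to deduce the statement directly from \cref{mainthm} by reading $D$ as the bidirected graph $B=(G,\sigma)$ described above, where $G$ is the underlying undirected graph of $D$ and $\sigma$ gives a half-edge $(u,e)$ the sign $+$ precisely when $u$ is the head of $e$. Recall that then $V(B)=V(D)$ and that the cycles of $B$ are in canonical bijection with the cycles of $D$.

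First I would check that this bijection is compatible with both optimisation problems appearing in the statement. The bijection comes from identifying the edge sets of $D$ and $B$, so a cycle of $D$ and its image in $B$ span the same vertex set; hence a cycle of $D$ is an $S$-cycle if and only if its image is, two cycles of $D$ are vertex-disjoint if and only if their images are, and a collection $\mathcal{C}$ of vertex-disjoint cycles of $D$ corresponds to such a collection in $B$ with the same value of $|S\cap V(\mathcal{C})|$. Since moreover $V(B)=V(D)$, a set $X\subseteq V(D)$ hits all $S$-cycles of $D$ if and only if it hits all $S$-cycles of $B$. Thus the left- and right-hand sides of the inequality coincide with the corresponding quantities for $B$ and $S\subseteq V(B)$, and \cref{mainthm} applied to $B$ then yields exactly the desired inequality.

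Because the reduction is this direct, I do not expect a genuine obstacle here: all the substance lies in \cref{mainthm}. The only things needing brief care are degenerate configurations in $D$. A digon $u\to v$, $v\to u$ turns into two parallel edges of $G$ with opposite signs at each endpoint, so it contributes a legitimate length-two cycle of $B$ and the bijection is undisturbed, whereas two parallel edges of $D$ with the same direction create no spurious bidirected cycle because the signs do not switch there. If one permits loops in $D$ — which bidirected graphs forbid — one first reduces to the loopless case: a loop at $v\notin S$ is irrelevant to both sides and can be discarded, while for a loop at $v\in S$ one checks that deleting $v$ lowers the right-hand side by exactly one and the left-hand side by at least one (re-insert the loop at $v$ into an optimal collection for $D-v$), so the claim for $D$ follows by induction from the claim for $D-v$.
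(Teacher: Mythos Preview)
Your proposal is correct and follows exactly the route the paper takes: view $D$ as the bidirected graph $(G,\sigma)$ with $\sigma(u,e)=+$ iff $u$ is the head of $e$, observe the cycle bijection, and apply \cref{mainthm}. The paper in fact omits all details and simply states the reduction, so your explicit verification of the compatibility with both optimisation problems and your treatment of the degenerate cases (digons, parallel arcs, loops) only add rigour to the same argument.
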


\subsection{A counterexample to the edge-version in bidirected graphs} \label{subsec:BiDirEdge}
The following proposition now asserts that the edge-version of~\cref{quest:vertex} does not hold in bidirected graphs: 
\begin{proposition} \label{prop:BidirectedFalse}
	For every~$k \in \N$ there is a bidirected graph $B_k$ and a set~$F_k$ of edges of~$B_k$ such that
	\begin{enumerate}[label=(\arabic*)]
		\item \label{item:OnlyOneElementOfFk} for every collection $\mathcal{C}$ of edge-disjoint cycles in~$B_k$, we have $|F_k \cap E(\mathcal{C})| \leq 1$, and
		\item \label{item:NoSetOfSizeK} for every edge set $X_k$ of size at most $k$, $B_k - X_k$ contains a cycle meeting~$F_k$.
	\end{enumerate}
\end{proposition}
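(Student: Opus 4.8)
The plan is to construct $B_k$ so that a single cycle through $F_k$ is forced to use a long ``chain'' of edges that cannot be short-circuited, exploiting the failure of Menger's theorem in bidirected graphs.

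\medskip

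\noindent\textbf{Approach.} I would build $B_k$ from a gadget that behaves like a ``one-way street of forced length at least $k+1$'': a bidirected path $P = v_0 e_1 v_1 e_2 \cdots e_{k+1} v_{k+1}$ in which the signs at the internal vertices are chosen so that any sign-switching walk from $v_0$ to $v_{k+1}$ must traverse all of $e_1,\dots,e_{k+1}$, with no way to ``skip'' via the endpoints or via parallel connections. The point of~\cite{bowler2023mengertheorembidirected} is precisely that in bidirected graphs one can have a single $v_0$--$v_{k+1}$ path but the ``vertex-'' or ``edge-connectivity'' behaves pathologically; here I want the dual phenomenon of a unique (up to the relevant structure) long connection whose edges each individually cut it. Concretely, I would then add two further edges $f$ and $f'$ joining $v_0$ and $v_{k+1}$ with signs chosen so that $f$ (resp.\ $f'$) closes a cycle with $P$, but $f$ and $f'$ do not form a cycle together (their signs clash at $v_0$ or at $v_{k+1}$). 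Set $F_k := \{f\}$ or, to make~\ref{item:OnlyOneElementOfFk} easiest, $F_k := \{f, f'\}$ chosen so that no cycle can contain more than one of them.

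\medskip

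\noindent\textbf{Key steps, in order.}
First I would fix the precise signing of the gadget and verify the \emph{uniqueness/length} property: every cycle of $B_k$ meeting $F_k$ consists of one of the $F_k$-edges together with all of $e_1, \ldots, e_{k+1}$; in particular every such cycle has at least $k+1$ edges from $E(P)$.
Second, I would prove~\ref{item:OnlyOneElementOfFk}: since any $F_k$-cycle uses all of $P$, two edge-disjoint $F_k$-cycles would need two edge-disjoint copies of $P$, which is impossible because $P$ consists of genuine bridging edges --- alternatively, if $F_k=\{f,f'\}$, arrange the signs so $f$ and $f'$ cannot lie on a common cycle and a second cycle through the remaining one would reuse $E(P)$.
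Third, I would prove~\ref{item:NoSetOfSizeK}: given any edge set $X_k$ with $|X_k|\le k$, since every $F_k$-cycle contains $k+1$ specified edges $e_1, \ldots, e_{k+1}$ (plus an $F_k$-edge), and an $F_k$-edge has no cheaper route --- here I need $|F_k|\ge k+1$ or, better, I need that $X_k$ cannot kill all $F_k$-cycles. So I would actually take a \emph{bundle}: replace $f$ by $k+1$ parallel edges $f_0, \ldots, f_k$ each closing a cycle with $P$, put $F_k := \{f_0, \ldots, f_k\}$, and note that to destroy every $F_k$-cycle one must hit either some $e_i$ ($i=1, \ldots, k+1$, that is $k+1$ choices) or every $f_j$ ($k+1$ edges); either way $X_k$ of size $\le k$ misses one, leaving an $F_k$-cycle in $B_k - X_k$. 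For~\ref{item:OnlyOneElementOfFk} I then re-check that any two of the $f_j$ together with $E(P)$ cannot be partitioned into two edge-disjoint cycles --- true because they share all of $E(P)$.

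\medskip

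\noindent\textbf{Main obstacle.} The delicate point is choosing the signing so that \emph{both} the gadget path is genuinely forced to have length $\ge k+1$ (no sign-switching shortcut appears, in particular no short cycle using two of the $f_j$'s and a sub-path of $P$, and no cycle that enters $P$ in the middle) \emph{and} simultaneously no collection of edge-disjoint cycles picks up two $F_k$-edges. These two requirements pull in opposite directions: making $P$ ``rigidly long'' tends to create extra incidences one must then check do not produce a second cycle. I expect the cleanest way is to make every internal vertex $v_i$ of $P$ have degree exactly $2$ in $B_k$ (so nothing but $P$ passes through it) and to push all the multiplicity ($f_0,\dots,f_k$) and all the sign clashes onto the two endpoints $v_0, v_{k+1}$, where a careful case check over the at-most-four half-edge signs at each endpoint settles~\ref{item:OnlyOneElementOfFk}. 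Verifying that the resulting object is a legitimate bidirected graph (no loops, signs well-defined) and that ``$B_k - X_k$'' for edge sets is the expected subgraph is then routine.
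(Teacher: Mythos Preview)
Your construction has a fatal gap in the verification of~(2). In the final design you make every internal vertex of $P$ have degree exactly~$2$, so every cycle through any $f_j$ must use \emph{all} of $e_1,\dots,e_{k+1}$. But then the single edge set $X_k=\{e_1\}$ already destroys every $F_k$-cycle, so~(2) fails for every $k\ge 1$. Your sentence ``to destroy every $F_k$-cycle one must hit either some $e_i$ \dots\ or every $f_j$; either way $X_k$ of size $\le k$ misses one'' is where the error sits: hitting \emph{one} $e_i$ is a single edge, not $k+1$ edges, and it suffices.

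More conceptually, the two requirements~(1) and~(2) together ask for a pair of vertices with no two edge-disjoint paths between them but no edge-cut of size~$\le k$ either. In undirected or directed graphs this is impossible by Menger's theorem, and any ``shared long path'' gadget like yours is exactly the Menger situation (one path, one-edge cut). The only way to satisfy both is to genuinely exploit the failure of edge-Menger in bidirected graphs, which you mention in your opening sentence but never actually invoke. The paper does precisely this: it quotes the known bidirected edge-Menger counterexample $A_k$ with vertices $x,y$ admitting no two edge-disjoint $x$--$y$~paths yet surviving the deletion of any $k$ edges, identifies $x$ with $y$, and takes $F_k$ to be the edges with sign~$-$ at the merged vertex. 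Then $F_k$-cycles correspond bijectively to $x$--$y$~paths in~$A_k$, which immediately gives both~(1) and~(2). Your plan would need to reproduce such an $A_k$ from scratch, and the degree-$2$ path cannot do that.
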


\noindent For the proof of~\cref{prop:BidirectedFalse}, we use the following result:

\begin{theorem}[\cite{bowler2023mengertheorembidirected}*{Theorem~3.3}] \label{thm:counterexample_edge_menger}
    For each number~$k \in \N$, there exists a bidirected graph~$A_k$ and distinct vertices~$x$ and~$y$, such that there are no two edge-disjoint~$x$--$y$~paths and for each subset~$S \subseteq E(A_k)$ of size~$k$ there exists an~$x$--$y$~path in~$A_k - S$.
\end{theorem}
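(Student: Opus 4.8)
The plan is to exhibit, for each $k$, an explicit bidirected graph $A_k$ with two distinguished vertices $x, y$, and to check the two required properties by hand. There is no soft argument available, since we are after a \emph{counterexample}: in an undirected graph --- or even a directed one --- the two properties cannot coexist, because together they would contradict Menger's theorem (if there are no two edge-disjoint $x$--$y$ paths then a single edge separates $x$ from $y$, so deleting it disconnects them). Hence whatever $A_k$ we build must use the sign-switching restriction essentially. Concretely, the aim is a graph in which the set $\mathcal P$ of \emph{all} $x$--$y$ paths is tightly controlled: $\mathcal P$ should be \emph{edge-intersecting} (any two of its members share an edge), which gives the first property (no two edge-disjoint $x$--$y$ paths), while at the same time $\mathcal P$ should have \emph{large covering number}, i.e.\ no set of $k$ edges meets every member of $\mathcal P$, which gives the second property (deleting any $k$ edges leaves an $x$--$y$ path). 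Such a family cannot exist in an undirected graph but can in a bidirected one: after splitting each vertex into a $+$-port and a $-$-port (as in the construction of $B'$ above), a path must pass from one port to the other, and a suitably chosen signing can force every $x$--$y$ path through a prescribed ``corridor'' of edges without any single corridor edge lying on all of them.

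First I would fix the combinatorial skeleton of $\mathcal P$: either take the shared (``corridor'') edges to behave like the points of a design with large blocking number, or --- more robustly --- build $A_k$ recursively, so that $A_k$ contains a copy of $A_{k-1}$ and each recursion step raises the covering number of the path family by one while preserving the edge-intersecting property. Then I would realise this skeleton as an honest bidirected graph: place the corridor edges internally, route the intended paths through them, and choose the signing at each internal vertex so that the only way to leave that vertex is along the next intended edge. The key device is again the port description of bidirected paths: a vertex at which exactly one incident edge carries a given sign forces every path through it to use that edge, and this is the mechanism by which unintended detours get suppressed.

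The bulk of the work, and the main obstacle, is the verification of the first property: one must show that the signing admits \emph{no} $x$--$y$ path outside the intended family $\mathcal P$ --- in particular no short shortcut that would be edge-disjoint from some member of $\mathcal P$. This is delicate precisely because it is in tension with the second property: to keep the covering number above $k$ the graph must be highly connected, so there is ample room for stray paths, and every sign choice that rigidifies one intended path tends to open a shortcut elsewhere (for instance, forcing two paths to share a corridor edge $ab$ forces both of them through $a$ and through $b$, which risks creating a direct route $x \to a \to b \to y$). Handling this needs a layered construction in which $x$ and $y$ are kept far from every corridor edge and the signing is checked port by port until the family of all $x$--$y$ paths is pinned down to $\mathcal P$. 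Once that is done, the first property is immediate because $\mathcal P$ is edge-intersecting, and the second property follows by exhibiting, for each candidate edge set $S$ with $|S| = k$, an explicit member of $\mathcal P$ disjoint from $S$ --- which is exactly where the design-theoretic (or recursive) lower bound on the covering number of $\mathcal P$ is used.
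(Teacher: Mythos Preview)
The paper does not prove this statement: it is quoted from \cite{bowler2023mengertheorembidirected}*{Theorem~3.3} and used as a black box in the proof of \cref{prop:BidirectedFalse}. The only trace of the construction in the present paper is \cref{fig:counterexample_bidirected_cp_hs_bidi}, which depicts the graph $B_k$ obtained from $A_k$ by identifying $x$ and $y$; from that picture one can read off that the intended $A_k$ is a layered ``staircase'' of small two-vertex bidirected gadgets, and the argument in the cited source is an explicit inductive construction along those lines.

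Your proposal correctly isolates the combinatorial target --- an edge-intersecting family of $x$--$y$ paths with covering number exceeding $k$ --- and correctly flags the sign-switching (port) mechanism as the feature that makes this possible in bidirected but not in (un)directed graphs. But what you have written is a plan, not a proof: you never commit to a specific graph $A_k$, and the entire content of the theorem lies in the explicit construction and its verification. The paragraph beginning ``The bulk of the work\dots'' names the obstacle accurately and then stops short of overcoming it; phrases such as ``a suitably chosen signing'', ``a layered construction in which $x$ and $y$ are kept far from every corridor edge'', and ``checked port by port'' are promissory notes rather than arguments. Since you give neither the graph nor the case analysis showing that \emph{every} $x$--$y$ path lies in your intended family $\mathcal P$, there is nothing here that can be checked, and hence nothing to compare to the cited proof beyond the (sound) high-level strategy.
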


\begin{proof}[Proof of~\cref{prop:BidirectedFalse}]
	Let~$A_k$, $x$ and~$y$ be as in~\cref{thm:counterexample_edge_menger}.
	We may assume without loss of generality that all edges incident to~$x$ in~$A_k$ have sign~$+$ at~$x$ and that all edges incident to~$y$ in~$A_k$ have sign~$-$ at~$y$.
	Now let~$B_k$ be the bidirected graph obtained from~$A_k$ by identifying~$x$ and~$y$, and let~$F_k$ be the set of edges with sign~$-$ at~$x = y$, i.e.\ the set of edges incident to~$y$ in~$A_k$.
    For an illustration of $B_k$, refer to \cref{fig:counterexample_bidirected_cp_hs_bidi}.
    In this figure, the blue vertex at the top-left corner represents the vertex resulting from the identification of $x$ and $y$ and $F_k$ consists of the dotted edges which were initially incident to $y$.
    We claim that~$B_k$ and~$F_k$ are as desired. 
	
	For~\labelcref{item:OnlyOneElementOfFk}, let~$C$ be an arbitrary cycle in~$B_k$ containing an edge of~$F_k$ (like e.g.\ the one in red in~\cref{fig:counterexample_bidirected_cp_hs_bidi}).
	The cycle~$C$ contains exactly one edge with sign~$-$ at~$x = y$ and exactly one edge with sign~$+$ at~$x = y$; in particular, $C$ contains exactly one edge of~$F_k$.
	Furthermore, the edge set~$E(C)$ induces an~$x$--$y$~path in~$A_k$.
	Since there are no two edge-disjoint~$x$--$y$~paths in~$A_k$, every collection $\mathcal{C}$ of edge-disjoint cycles has at most one element that hits $x = y$.
    In particular, $|F_k \cap E(\mathcal{C})| \leq 1$ holds for every collection~$\mathcal{C}$ of edge-disjoint cycles in~$B_k$.
	
	For~\labelcref{item:NoSetOfSizeK}, let~$X_k \subseteq E(B_k) = E(A_k)$ with~$|X_k| \leq k$ be arbitrary.
	There is an~$x$--$y$~path~$P$ in~$A_k - X_k$ by~\cref{thm:counterexample_edge_menger}.
	By the construction of~$B_k$, the edge set~$E(P)$ induces a cycle in~$B_k - X_k$, and this cycle contains an edge of~$F_k$.
\end{proof}

	\begin{figure}
	\begin{tikzpicture}[scale=.85]
		\node[dot,label={[xshift=-0.9cm, yshift=0.1cm,color={blue}]$x = y$},fill={blue}] (a) at (-.5,7) {};
		\node[dot] (b2) at (1.5,1.5) {};
		\node[dot] (b3) at (2.33,1.5) {};
		\node[dot] (c2) at (1.5,3) {};
		\node[dot] (c3) at (2.25,3) {};
		\node[dot] (c5) at (3.75,3) {};
		\node[dot] (c6) at (4.5,3) {};
		\node[dot] (d2) at (1.5,4.5) {};
		\node[dot] (d3) at (2.25,4.5) {};
		\node[dot] (d5) at (3.75,4.5) {};
		\node[dot] (d6) at (4.5,4.5) {};
		\node[dot] (d8) at (6.25,4.5) {};
		\node[dot] (d9) at (7,4.5) {};
		\node[dot] (e2) at (1.5,6) {};
		\node[dot] (e3) at (2.25,6) {};
		\node[dot] (e5) at (3.75,6) {};
		\node[dot] (e6) at (4.5,6) {};
		\node[dot] (e8) at (6.25,6) {};
		\node[dot] (e9) at (7,6) {};
		\node[dot] (e11) at (8.5,6) {};
		\node[dot] (e12) at (9.25,6) {};
		\draw [-e+5, thick] (e2) ..controls+(-1cm,0) and +(1.2cm,.2)  .. (a);
	    \draw [-e+6, thick] (d2) ..controls+(-1cm,0) and +(1.2cm,-1)  .. (a);
		\draw [-e+6, thick] (b2) ..controls+(-1cm,-.2) and +(-1.2cm,-2)  .. (a);
	\draw [-e+, thick] (b3) ..controls+(2cm:-5) and +(left:2cm)  .. (a);
	\draw [-e+, thick] (b3) ..controls+(2cm:-5) and +(left:2cm)  .. (a);
	\draw [dashed, thick] (e2) ..controls+(0,1.5) and +(.5cm,1)  .. (a);
	\draw [dashed, thick] (e5) ..controls+(0,2) and +(.5cm,1.5)  .. (a);
	\draw [dashed, thick] (e11) ..controls+(0,2.5) and +(.5cm,2.5)  .. (a);
	\draw [dashed, thick] (e12) ..controls+(3,2) and +(.5cm,3.5)  .. (a);	
		\foreach \x in {b,d,e} 
		{\draw[-e+2, thick] (\x2) to (\x3);
			\draw[-e+2, thick] (\x3) to (\x2);}
		\foreach \x in {d,e} 
		{\draw[-e+2, thick] (\x5) to (\x6);
			\draw[-e+2, thick] (\x6) to (\x5); }
		\draw[-e+2, thick] (e11) to (e12);
		\draw[-e+2, thick] (e12) to (e11);
		\foreach \x/\y in {b3/c6,d9/e12} 	
		\draw [thick] (\x) ..controls+(right:1.9cm) and +(down:1.9cm)  .. (\y);	
		\foreach \x/\y in {2/3,5/6}
		\draw[thick] (e\y) to (d\x); 	
		\foreach \x/\y in {2/3,5/6}
		\draw[thick] (d\y) to (c\x); 	
		\draw[thick] (c3) to (b2); 	
		\foreach \x in {d,e}
		\draw[thick] (\x3) to (\x5); 
		\foreach \x in {d,e}
		\draw[thick] (\x6) to (\x8); 
		\draw[thick] (e9) to (e11);
        \draw [-e+, thick, color={red}] (c2) ..controls+(-1cm,0) and +(.5cm,-3.5)  .. (a);
        \draw[-e+2, thick, color={red}] (c2) to (c3);
        \draw[-e+2, thick, color={red}] (c3) to (c2);
        \draw[thick, color={red}] (c3) to (c5);
        \draw[-e+2, thick, color={red}] (c5) to (c6);
        \draw[-e+2, thick, color={red}] (c6) to (c5);
        \draw [thick, color={red}] (c6) ..controls+(right:1.9cm) and +(down:1.9cm)  .. (d9);
        \draw[-e+2, thick, color={red}] (d9) to (d8);
        \draw[-e+2, thick, color={red}] (d8) to (d9);
        \draw[thick, color={red}] (d8) to (e9);
        \draw[-e+2, thick, color={red}] (e9) to (e8);
        \draw[-e+2, thick, color={red}] (e8) to (e9);
        \draw [dashed, thick, color={red}] (e8) ..controls+(0,2) and +(.5cm,2)  .. (a);
	\end{tikzpicture}  
 \vspace{-2.5cm}
	    \caption{A counterexample to the edge-version in bidirected graphs. More precisely, a bidirected graph in which there are no two edge-disjoint cycles containing a dotted edge. For every edge set $X$ of size $\leq 2$ there exists a cycle in $B - X$ containing a dotted edge.}
	    \label{fig:counterexample_bidirected_cp_hs_bidi}
	\end{figure}
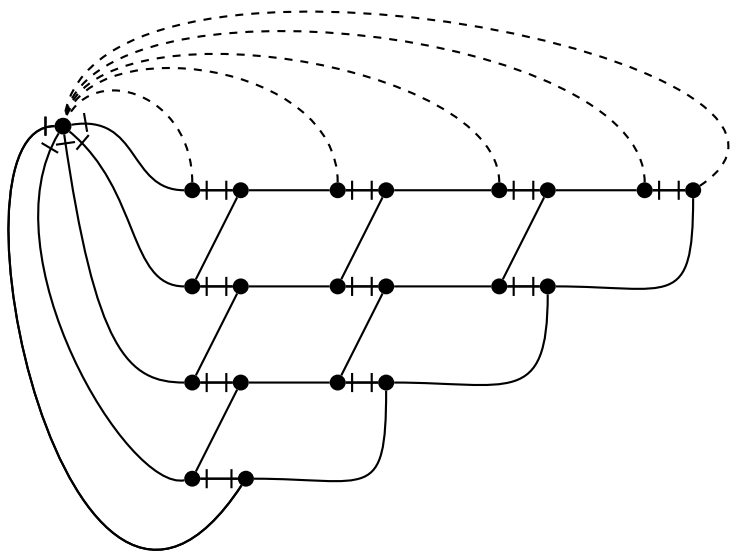

\section{An Application: (Bi)directed Tree-Width and Cycle-Width} \label{sec:DirTWandCycleW}

Cycle-width was introduced by Hatzel, Rabinovich and Wiederrecht~\cite{hatzel2019cyclewidth} as a tool to make the directed grid theorem of Kawarabayashi and Kreutzer~\cite{kawarabayashi2015directed} accessible in the realm of bipartite graphs with perfect matchings.
They used it to resolve Norine's matching grid conjecture~\cite{norine2005matching} for the case of bipartite graphs.
The key features of cycle-width are that it is equivalent to both a respective matching-theoretic width-measure, called perfect matching width, and directed tree-width.
Here, we are concerned with this later equivalence between cycle-width and directed tree-width, which we show to be linear.

\corDirTWCycleW*

With the same proof, we will also show the corresponding statement for bidirected graphs, namely that bidirected tree-width is linearly bounded by cycle-width.
We hope this to be useful towards the full resolution of Norine's matching grid conjecture~\cite{norine2005matching} along the route suggested by Wiederrecht~\cite{wiederrecht2020thesis}.
Our statement holds for bidirected graphs $B=(G, \sigma)$ that are \emph{circular}, that is, every edge of $B$ is contained in a cycle and $G$ is connected.
This, however, is no restriction in the context of strong connectivity in bidirected graphs (see~\cite{wiederrecht2020thesis}*{text before~Lemma~9.2.18}).

\begin{corollary} \label{cor:BidiTWCycleW}
    For every circular bidirected graph $B$, we have $\mathrm{btw}(B) \leq 9 \cycw(B) + 2$.
\end{corollary}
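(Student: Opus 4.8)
\medskip

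The plan is to repeat, \emph{mutatis mutandis}, the proof of \cref{cor:DirTWCycleW}, replacing \cref{thm:CycleDirGraphVertex} by its bidirected strengthening \cref{mainthm}, and reading ``directed tree-width'' and ``directed graph'' as ``bidirected tree-width'' and ``bidirected graph'' throughout. The circularity hypothesis on $B$ is used only to guarantee that every edge of $B$ lies on a cycle, so that the cycle-decomposition formalism behaves exactly as in the directed case; the shift of the additive constant from $-2$ to $+2$ is forced purely by the (slightly different) normalisation in the definition of bidirected tree-width. Let me describe the common argument.

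Fix an optimal cycle-decomposition of $B$, of width $k:=\cycw(B)$; it is carried by a tree $T$ each of whose edges $e$ induces a separation $(A_e,B_e)$ of $V(B)$ whose width — the relevant packing quantity measured by $\cycw$ — is at most $k$. We build a bidirected arboreal decomposition on a rooting of $T$. Its bags are essentially forced (singletons at the leaves, empty sets at internal nodes), and the only real choice is the guard $\gamma(e)$ attached to each tree-edge, whose job is to make the part $B_e$ bidirected-normal. The key point is to choose $\gamma(e)$ to be a vertex set hitting every cycle of $B$ that crosses the separation $(A_e,B_e)$. Granting that such a hitting set can be taken of size $O(k)$ and arranging the standard bookkeeping — each internal node of $T$ is incident with three tree-edges — one obtains torsos of size at most $9k$, hence $\mathrm{btw}(B)\le 9k+2$.

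Everything therefore reduces to the quantitative claim that, \emph{for each separation $(A_e,B_e)$, the cycles of $B$ crossing it admit a hitting set of size at most $\cycw(B)$}. This is where \cref{mainthm} enters and improves on the Erd\H{o}s--P\'{o}sa-type estimate used by Giannopoulou, Kreutzer and Wiederrecht~\cite{giannopoulou2021excluding}, whose loss is exactly what makes their bound quadratic. The observation is that ``crossing $(A_e,B_e)$'' is an $S$-cycle condition in a harmless modification $B'_e$ of $B$: subdivide each edge of $B$ that has one endpoint in $A_e$ and one in $B_e$ (assigning signs at the new vertex so that sign-switching cycles are preserved), and let $S$ be the set of new subdivision vertices; then a cycle of $B'_e$ crosses $(A_e,B_e)$ precisely when it passes through a vertex of $S$, while collections of vertex-disjoint cycles, and the quantity that $\cycw$ measures across this separation, are unchanged. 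Applying \cref{mainthm} to $(B'_e,S)$ converts the cycle-decomposition's packing bound directly into a hitting set for the $S$-cycles — equivalently, after replacing any subdivision vertex by an endpoint of the subdivided edge, a hitting set for the crossing cycles of $B$ — of size at most $k$, with no logarithmic loss. Installing these sets (thickened appropriately for the normality condition, which is where the remaining factor hidden in the $9$ arises) as guards and reassembling over $E(T)$ yields the bidirected arboreal decomposition.

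The main obstacle is the verification that this candidate really is a bidirected arboreal decomposition of the claimed width: one must confirm that the chosen guards genuinely render each $B_e$ bidirected-normal rather than merely meeting the crossing cycles through the separation's boundary, that guards along incident tree-edges fit together, and that the constant accounting is tight enough to land on $9\cycw(B)+2$ rather than something weaker — circularity of $B$ being precisely the hypothesis that excludes the degenerate edges which would otherwise spoil this count. Since none of the three ingredients — the cycle-decomposition machinery, the subdivision reduction, and \cref{mainthm} — distinguishes between directed and bidirected graphs, this is, line for line, the proof of \cref{cor:DirTWCycleW}.
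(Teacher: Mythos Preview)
Your proposal correctly identifies the decisive ingredient --- subdivide the edges of each cut $C_e$, apply \cref{mainthm} to the subdivision vertices, and pull back to a hitting set $Y_e\subseteq V(B)$ of size at most $k$ for all cycles crossing $C_e$ --- and this is exactly what the paper does. But the route from these hitting sets to bounded bidirected tree-width is different.

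The paper does \emph{not} build an arboreal decomposition directly with the $Y_e$ as guards. Instead it uses the $Y_e$ to describe a winning strategy for $3k$ cops in the bidirected cops-and-robbers game: the cops sit on $Y_f$, then temporarily expand to $Y_f\cup Y_{f_1}\cup Y_{f_2}$ (the union over the three tree-edges at a cubic node), which forces the robber into a strictly smaller subtree, then contract to $Y_{f_j}$ again. A black-box result from Wiederrecht's thesis (\cref{thm:CopsAndRobbersBidi}) then converts ``$3k$ cops win'' into $\mathrm{btw}(B)\le 3\cdot 3k+2$. The factor $9$ and the additive $+2$ both come out of that black box, and circularity is needed precisely because \cref{thm:CopsAndRobbersBidi} assumes it --- not, as you suggest, to exclude degenerate edges in a width count.

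Your direct-construction approach is not wrong in spirit, but it is a genuinely different argument, and you yourself flag the gap: you would need to verify that hitting all crossing cycles makes the far side $Y_e$-\emph{normal} in the bidirected sense, and to justify why three guards per node give width $9k$ rather than $3k$ (in your sketch the $9$ has no visible source). The cops-and-robbers route sidesteps both issues entirely by outsourcing them to \cref{thm:CopsAndRobbersBidi}.
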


Towards the proof of~\cref{cor:DirTWCycleW} and \cref{cor:BidiTWCycleW}, we begin with the formal definition of the cycle-width of a (bi)directed graph.
\begin{definition}[\cite{wiederrecht2020thesis}*{Definition 5.3.11}] \label{def:CycleWidth}
	A \emph{cycle-decomposition} of a (bi)directed graph~$D$ is a pair~$(T, \varphi)$ of a cubic tree~$T$ and a bijection~$\varphi$ between~$V(D)$ and the set~$L(T)$ of leaves of~$T$.
	Each edge~$t_1 t_2$ of~$T$ then \emph{induces} a cut~$E_G(V_1, V_2)$ of~$D$ where~$V_i = \varphi(L(T_i))$ and~$T_i$ is the subtree of~$T - t_1 t_2$ containing~$t_i$ for~$i = 1, 2$.
	
	The~\emph{cycle porosity} of a cut~$F$ of~$D$ is defined as
	\begin{equation*}
		\cp(F) := \max_{\substack{\text{$\C$ collection of} \\ \text{pairwise vertex-disjoint} \\ \text{cycles in~$D$}}} |F \cap E(\C)|.
	\end{equation*}
	The \emph{width} of a cycle-decomposition~$(T, \varphi)$ of~$D$ is then defined as the maximum cycle porosity of a cut of~$D$ induced by an edge of~$T$.
	The \emph{cycle-width}~$\cycw(D)$ of~$D$ is the minimum width of a cycle-decomposition of~$D$.
\end{definition}

Our proof of~\cref{cor:DirTWCycleW} and \cref{cor:BidiTWCycleW} proceeds analogously to the one of Giannopoulou, Kreutzer and Wiederrecht in their quadratic upper bound~\cite{giannopoulou2021excluding}*{Proposition 3.3}; we include the full formal proof here for completeness.
The proof does not use the definition of (bi)directed tree-width itself, but a sufficient condition for bounded (bi)directed tree-width via a suitable `cops \& robbers-game' on (bi)directed graphs, \cref{thm:CopsAndRobbers,thm:CopsAndRobbersBidi} below.
We therefore use (bi)directed tree-width as a black box here and refer the reader to~\cite{wiederrecht2020thesis}*{Definition~9.2.17} for the definition.

\begin{definition}[\cite{johnson2001directed},\cite{wiederrecht2020thesis}*{Definition~9.2.11}]
    The \emph{cops \& robbers-game} is played between $k$ cops and a robber on a (bi)directed graph~$D$ as follows: For the start the cops take an initial position~$C_0 \subseteq V(D)$ and the robber chooses her initial position~$R_0$, a strong component of~$D - C_0$.

    After the~$i$-th round, let~$C_i \subseteq V(D)$ be the position of the cops and let~$R_i$ be the strong component of~$D - C_i$ in which the robber is situated.
    Since there are $k$ cops, we stipulate that $|C_i| \leq k$.
    The cops then announce their new position~$C_{i+1} \subseteq V(D)$ and the robber afterwards chooses as her position a strong component~$R_{i+1}$ of~$D - C_{i+1}$ such that~$R_i \cup R_{i+1} \subseteq R$ for some strong component~$R$ of~$D - (C_i \cap C_{i+1})$.
    If no such~$R_{i+1}$ exists, then the cops \emph{catch} the robber after~$i+1$ rounds.

    If $k$ cops can ensure that they catch the robber after a finite number of rounds independently of the robber's moves, then the cops have a \emph{winning strategy}, and we say that \emph{$k$ cops can catch the robber on~$B$}.
\end{definition}

 \begin{theorem}[\cite{johnson2001directed}] \label{thm:CopsAndRobbers}
     For every directed graph~$D$ and every~$k \in \N$, we have~$\dtw(D) \le 3k - 2$ if~$k$ cops can catch the robber on~$D$.
 \end{theorem}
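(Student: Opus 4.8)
The plan is to reduce to the definition of directed tree-width from~\cite{johnson2001directed}: it suffices to build an \emph{arboreal decomposition} of $D$ of width at most $3k-2$. Recall that such a decomposition is a triple $(R,(B_r)_{r\in V(R)},(W_e)_{e\in E(R)})$ with $R$ an arborescence, bags $B_r\subseteq V(D)$ partitioning $V(D)$, edge-guards $W_e\subseteq V(D)$, such that for every edge $e$ of $R$ directed from $s$ to $t$ the set $B_{\ge t}:=\bigcup_{r\ge t}B_r$ admits no directed walk in $D-W_e$ with both endvertices in $B_{\ge t}$ that passes through a vertex outside $B_{\ge t}$; its width is $\max_r|B_r\cup\bigcup\{W_e:e\text{ incident with }r\}|-1$.

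It is cleanest to pass through the notion of a \emph{haven} of order $k$ in $D$, that is, an assignment to every vertex set $Z$ with $|Z|$ below the relevant threshold of a strong component $\beta(Z)$ of $D-Z$ that is compatible with inclusions among the $Z$'s in the usual way. (Working directly with the strategy tree of the cops is also possible, but there the robber's reachable territory need not shrink monotonically down the tree, which havens circumvent.) First I would observe that a winning strategy for $k$ cops precludes such a haven: from a haven $\beta$ the robber could always retreat to $\beta(C_i)$ and, using the compatibility condition, legally stay there forever, contradicting that the cops catch her. It therefore suffices to prove that the absence of a haven of order $k$ forces $\dtw(D)\le 3k-2$. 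For this I would build $R$ recursively: while processing a ``region'' of $D$ together with a bounded set of guard vertices already committed, the absence of a haven yields a set of at most $k$ further vertices whose deletion splits the region into strictly smaller strong pieces; recursing on each piece and recording the deleted vertices as the new bag and as guards on the incident edges builds the arborescence, the bags partitioning $V(D)$ because the recursion eventually exhausts every vertex. The normality condition then falls out of the construction, since by design every $B_{\ge t}$ is a union of strong components of $D$ minus the corresponding guard set.

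The hard part will be the width accounting: the recursion must be organised so that at each node the bag together with the guards on the (at most three) incident edges is jointly covered by only three of the $O(k)$-sized sets arising in the construction, and pinning the resulting bound down to exactly $3k-2$---rather than a weaker constant times $k$---requires care with the root, the leaves, and the guards being shared between adjacent nodes. A related subtlety is the precise off-by-one in the thresholds defining havens and in the width convention of~\cite{johnson2001directed}, which has to be tracked to land on $3k-2$ and not, say, $3k-1$ or $3k+1$.
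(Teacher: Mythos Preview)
The paper does not contain a proof of this statement at all: \cref{thm:CopsAndRobbers} is quoted from~\cite{johnson2001directed} and used as a black box in the proof of \cref{cor:DirTWCycleW,cor:BidiTWCycleW}. So there is nothing on the paper's side to compare your attempt against.

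That said, your outline is exactly the argument of Johnson, Robertson, Seymour and Thomas in~\cite{johnson2001directed}: one shows that a winning strategy for $k$ cops excludes a haven of order $k$, and then that the absence of such a haven allows one to build an arboreal decomposition of width at most $3k-2$ by recursively splitting strong components. Your description of the recursion and of the guard/bag bookkeeping is accurate in spirit, and you correctly flag the one genuinely delicate point, namely the width accounting that lands on $3k-2$ rather than a looser linear bound. What you have written is a faithful plan for reproducing the original proof; to turn it into a self-contained argument you would still need to carry out the induction explicitly (specifying the invariant carried by each node---typically a pair $(Z,C)$ with $|Z|\le 2k-1$ and $C$ a strong component of $D-Z$---and checking that each internal node is incident with at most two old guard sets plus one new one), but there is no conceptual gap.
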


\begin{theorem}[\cite{wiederrecht2020thesis}*{Lemma 9.2.13, Theorem 9.2.20}] \label{thm:CopsAndRobbersBidi}
    For every circular bidirected graph~$B$ and every~$k \in \N$, we have~$\mathrm{btw}(B) \le 3k + 2$ if~$k$ cops can catch the robber on~$B$.
\end{theorem}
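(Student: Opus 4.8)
The plan is to deduce \cref{cor:BidiTWCycleW} from the cops \& robbers characterisation in \cref{thm:CopsAndRobbersBidi}: setting $k := \cycw(B)$, it suffices to show that $3k$ cops can catch the robber on the circular bidirected graph $B$, since then $\mathrm{btw}(B) \le 3\cdot 3k + 2 = 9k + 2$. (The directed statement \cref{cor:DirTWCycleW} then follows by the verbatim argument, using \cref{thm:CycleDirGraphVertex} and \cref{thm:CopsAndRobbers} in place of \cref{mainthm} and \cref{thm:CopsAndRobbersBidi}, which yields $3\cdot 3k - 2 = 9k - 2$.) So fix a cycle-decomposition $(T,\varphi)$ of $B$ of width $k$, so that every cut induced by an edge of $T$ has cycle porosity at most $k$.

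The first step is to turn each such cut into a small vertex hitting set. Let $e$ be an edge of $T$ inducing the cut $F_e = E_G(V_1, V_2)$, with $\cp(F_e) \le k$. Subdividing every edge $f = u_f v_f \in F_e$ (with $u_f \in V_1$) once into $u_f\, w_f\, v_f$, with signs chosen so that $w_f$ is sign-switching and the signs at $u_f$ and $v_f$ are unchanged, produces a bidirected graph $B_e$ in which the cycles through $S_e := \{w_f : f \in F_e\}$ correspond, preserving vertex-disjointness, exactly to the cycles of $B$ using an edge of $F_e$, and under this correspondence $|S_e \cap V(\C)|$ equals $|F_e \cap E(\C)|$. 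Hence $\max_\C |S_e \cap V(\C)| = \cp(F_e) \le k$, and \cref{mainthm} applied to $(B_e, S_e)$ yields a vertex set of size at most $k$ meeting all $S_e$-cycles; pulling it back to $B$ (replacing each $w_f$ by $u_f$) gives a set $X_e \subseteq V(B)$ with $|X_e| \le k$ that meets every cycle of $B$ using an edge of $F_e$. The consequence I will use is the \emph{side property}: every strong component of $B - X_e$ is contained in $V_1$ or in $V_2$; indeed a strong component meeting both sides contains a closed sign-switching walk using an edge of $F_e$, and an inclusion-minimal such walk is a cycle avoiding $X_e$ — a contradiction.

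With these separators in hand the cops chase the robber down $T$. Root $T$ at an arbitrary leaf $\rho$; for a non-root node $t$ let $e_t$ be the edge from $t$ to its parent, let $V_t := \varphi(L(T_t))$ where $T_t$ is the component of $T - e_t$ containing $t$, and set $X_t := X_{e_t}$. The cops maintain the invariant that they occupy $X_t$ and the robber's strong component of $B - X_t$ is contained in $V_t$; by the side property this is stable, and it is established at the start by placing a cop on $\varphi(\rho)$ and then trading it, via $X_{t_0} \cup \{\varphi(\rho)\}$, for $X_{t_0}$, where $t_0$ is the neighbour of $\rho$. If the current node $t$ is a leaf of $T$, then $V_t$ is a single vertex, and adding it to the cop set catches the robber. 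If $t$ is internal with children $t^1, t^2$, then the cops first move to $X_t \cup X_{t^1} \cup X_{t^2}$ — at most $3k$ cops — so that the robber's new component, a strong component of $B - (X_t \cup X_{t^1} \cup X_{t^2})$ lying inside $V_t = V_{t^1} \sqcup V_{t^2}$, is contained in a single strong component of each of $B - X_{t^1}$ and $B - X_{t^2}$, and hence in $V_{t^1}$ or in $V_{t^2}$, say in $V_{t^j}$; the cops then drop back to $X_{t^j}$, restoring the invariant at the child $t^j$. Each such round strictly increases the depth of the current node in $T$, so the robber is caught after finitely many rounds, with never more than $3k$ cops on the board.

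The main obstacle is carrying out the previous paragraph fully rigorously against the precise bookkeeping of the cops \& robbers-game: at each cop move one must check that $C_i \cap C_{i+1}$ is exactly the separator being held, so that the robber really is confined to the intended side of the relevant cut, and that the robber's forced new component does lie in a common strong component of $B - (C_i \cap C_{i+1})$ with the previous one — the transitions above are designed precisely so that $C_i \cap C_{i+1}$ is always one of the sets $X_t$. A secondary technical point, specific to the bidirected setting, is the last claim of the second paragraph: in a directed graph a shortest closed walk through a fixed edge is automatically a cycle, which already settles the directed case of \cref{cor:DirTWCycleW}, but a shortest closed sign-switching walk in a bidirected graph need not be a cycle, so one must use inclusion-minimality together with the circularity of $B$ to extract a genuine $F_e$-cycle from a strong component meeting both sides of the cut.
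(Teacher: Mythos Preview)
You have not attempted to prove \cref{thm:CopsAndRobbersBidi}; you have \emph{used} it. That theorem is the black-box assertion that a winning strategy for $k$ cops on a circular bidirected graph forces $\mathrm{btw}(B)\le 3k+2$, and the paper does not prove it either --- it is quoted from Wiederrecht's thesis (Lemma~9.2.13 and Theorem~9.2.20 there), just as \cref{thm:CopsAndRobbers} is quoted from Johnson--Robertson--Seymour--Thomas. What you have actually written is a proof of \cref{cor:DirTWCycleW,cor:BidiTWCycleW}, with \cref{thm:CopsAndRobbersBidi} (resp.\ \cref{thm:CopsAndRobbers}) as an input.

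Viewed that way, your argument is essentially the paper's own proof of those corollaries: subdivide each edge of the cut induced by $e\in E(T)$, apply \cref{mainthm} (resp.\ \cref{thm:CycleDirGraphVertex}) to the subdivision vertices to obtain a vertex set $X_e$ of size at most $k$ hitting every cycle that crosses the cut, and then chase the robber down $T$ with at most $3k$ cops by always holding the current separator and temporarily adding the two neighbouring ones before dropping back to one of them. The only cosmetic difference is that you root $T$ at a leaf and speak of parent/children, while the paper leaves $T$ unrooted and tracks an ever-shrinking subtree~$T_i$. Your flagged obstacle about extracting a genuine bidirected cycle from a strong component meeting both sides of a cut is a fair remark --- the paper glosses over the same step, simply asserting $R_i\subseteq\varphi(L(T_i))$ --- but in any case it is irrelevant to a proof of \cref{thm:CopsAndRobbersBidi} itself, which concerns the passage from a cop strategy to a bidirected tree-decomposition and not the construction of the strategy.
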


\begin{proof}[Proof of~\cref{cor:DirTWCycleW} and \cref{cor:BidiTWCycleW}]
    Before we start with the proof, we remark that the only part of the proof where we distinguish between directed and circular bidirected graphs is the very end where we apply~\cref{thm:CopsAndRobbers,thm:CopsAndRobbersBidi}, respectively, and hence get a small additive distinction between the respective bounds.
    
    Let $D$ be either a directed graph or a circular bidirected graph.
    Let~$(T, \varphi)$ be a cycle-decomposition of~$D$ of width~$k := \cycw(D)$.
    So for every edge~$e \in E(T)$, the induced cut $C_e$ has cycle-porosity at most~$k$.

    Let $D_e$ be the (bi)directed graph obtained by adding a subdivision vertex to each edge of $C_e$ and let $S_e$ be the set of all these subdivision vertices.
    Since the cycle-porosity is at most~$k$, every union of vertex-disjoint cycles hits at most $k$ vertices of $S_e$.
    Applying \cref{thm:CycleDirGraphVertex} if $D_e$ is a directed graph and \cref{mainthm} if $D_e$ is bidirected to the vertex set $S_e$ in $D_e$ provides a set of at most $k$ vertices in $V(D_e)$ that hits all cycles in $D_e$ containing a vertex of $S_e$.
    This gives a set $Y_e$ of at most $k$ vertices in $V(D)$ that hits all cycles in $D$ containing an edge of $C_e$ (by possibly replacing vertices of $S_e$ by their neighbours).
    We now use these sets~$Y_e$ to describe a winning strategy for~$3k$ cops.
    
    Set~$C_0 := Y_e$ as the initial position of the cops for an arbitrary edge $e \in E(T)$.
    Note that $|C_0 | = k \leq 3k$ by assumption.
    Now suppose that after the~$i$-th round, there exists an edge~$f \in E(T)$ with~$C_i = Y_f$.
    Then $R_i \subseteq \varphi(L(T_i))$ for a component $T_i$ of~$T - f$.
    If~$T_i$ contains only a single vertex~$t$, then the~$3k$ cops catch the robber after~$i+1$ rounds by setting~$C_{i+1} := C_i \cup \{\varphi(t)\}$ which has size at most~$k + 1 \le 3k$.
    
    Otherwise, let~$f_1$ and~$f_2$ be the two edges of~$T_i$ incident with $f$.
    We set~$C_{i+1} := C_i \cup Y_{f_1} \cup Y_{f_2}$.
    Then the robber must choose a strong component~$R_{i+1}$ of~$D - C_{i+1}$ that is contained in~$\varphi(L(T_{i+1}))$ for a component $T_{i+1}$ of $T_i - f_1 - f_2$ as~$C_i \subseteq C_i \cap C_{i + 1}$.
    Let $j \in \{1,2\}$ with the property that $f_j$ is incident with $T_{i+1}$.
    Set~$C_{i+2} := Y_{f_j}$ and $T_{i+2}:= T_{i+1}$, which ensures that~$R_{i+2} \subseteq \varphi(L(T_{i+2}))$ since $C_{i+2} \subseteq C_{i+1} \cap C_{i+2}$.
    Note that~$|C_{i+1}|, |C_{i+2}| \le 3k$ and that $T_{i+2}$ is a proper subtree of $T_i$.

    As~$D$ and hence~$T$ is finite, the subtrees~$T_i$ become smaller as~$i$ increases.
    This implies that the above described strategy indeed enables $3k$ cops to catch the robber on~$D$.

    If $D$ is a directed graph, we thus have~$\dtw(D) \le 3 (3k) - 2 = 9k - 2 = 9 \cycw(D) - 2$ by~\cref{thm:CopsAndRobbers}, as desired.
    If $D$ is a circular bidirected graph, we have $\mathrm{btw}(D) \le 3 (3k) + 2 = 9k + 2 = 9 \cycw(D) + 2$ by~\cref{thm:CopsAndRobbersBidi}, as desired.
\end{proof}

\section*{Acknowledgements}

The second author carried out this work during a Humboldt Research Fellowship at the University
of Hamburg. He thanks the Alexander von Humboldt-Stiftung for financial support.

The fourth author gratefully acknowledges support by doctoral scholarships of the Studienstiftung des deutschen Volkes and the Cusanuswerk -- Bisch\"{o}fliche Studienf\"{o}rderung.

The fifth author gratefully acknowledges support by a doctoral scholarship of the Studienstiftung des deutschen Volkes.

\bibliographystyle{unsrtnat}
\bibliography{reference}

\end{document}